\newtheorem{theorem}{Theorem}
\newtheorem{lemma}{Lemma}
\newtheorem{definition}{Definition}
\newtheorem{corollary}{Corollary}
\newtheorem{remark}{Remark}
\newtheorem{example}{Example}
\def\ret{{\rm{ Ret }}}
\def\ult{{\rm{ Ult }}}
\def\no{{\rm{ NO }}} 
\def\lo{{\rm{ LO }}}    
\def\age{{\rm{ Age }}}
\def\aut{{\rm{ Aut }}}
\def\homeo{{\rm{ Homeo }}}
\def\exp{{\rm{ Exp }}}
\def\card{{\rm{ card }}}
\title{Universal minimal flows of groups of automorphisms of uncountable structures}
\author{Dana Barto\v{s}ov\'a}
\address{Department of Mathematics\\
University of Toronto\\
Bahen Center\\
40 St. George St.\\
Toronto\\
Ontario\\
Canada\\
M5S 2E4}
\email{dana.bartosova@utoronto.ca}
\subjclass[2000]{37B05,03E02,05D10,22F50,54H20}
\begin{document}
\maketitle

\textbf{Abstract.} It is a well-known fact, that the greatest ambit for a topological group $G$ is the Samuel compactification of $G$ with respect to the right uniformity on $G.$  We apply the original destription by Samuel from 1948 to give a simple computation of the universal minimal flow for groups of automorphisms of uncountable structures using Fra\"iss\'e theory and  Ramsey theory. This work generalizes some of the known results about countable structures.

\section{Introduction}
Universal minimal flows play an important role in topological dynamics. In \cite{KPT}, the authors explicitely compute universal minimal flows of groups of automorphisms of countable structures with certain properties using Fra\"iss\'e theory and Ramsey theory. In the same spirit, we compute universal minimal flows for dense subgroups of automorphism groups of uncountable structures. However, for uncountable structures we do not have in hand a generic ordering, so we have to take a slightly different route. This work was initially inspired by a talk of Y.Gutman \cite{YG} on the universal minimal flow of the group of homeomorphisms of $\omega^*=\beta\omega\setminus\omega,$ where $\beta\omega$ is the \v{C}ech-Stone compactification of discrete $\omega,$ and a suggestion of Todor\v{c}evi\'c to reprove his result using the ideas of \cite{KPT}, a project that has been also the subject of his joint work with P. Ursino several years ago. We observe that the same space serves as the universal minimal flow for all dense subgroups, hence for all normal subgroups of the groups of trivial homeomorphisms of $\omega^*$ described by van Douwen in \cite{vD}.

The structure of the paper is as follows: In the first section, we introduce the basic notions from topological dynamics and describe them for the case of groups that admit a basis of neighbourhoods of the neutral element of open subgroups. In the second section, we talk about groups of automorphisms and their relationship with the groups in the first section. In the third section, we introduce crucial ingredients - Fra\"iss\'e classes, Ramsey theory and linear orderings. In the third section, we prove the main theorem and apply it to describe some universal minimal flows. In the fourth section, we use the main theorem to characterize extremely amenable groups, generalizing Theorem $4.3$ from \cite{KPT} to uncountable structures.

\section{Topological dynamics}

The central notion of topological dynamics is a continous action $\pi:G\times X\to X$ of a topological group $G$ on a compact Hausdorff space $X.$ We call $X$ a \emph{$G$-flow} and omit $\pi$ if it is understood and write $gx$ instead of $\pi(g,x).$ A \emph{homomorphism} of $G$-flows $X$ and $Y$ is a continuous map $\phi:X\to Y$ respecting the actions of $G$ on $X$ and $Y$, i.e. $\phi(gx)=g\phi(x)$ for every $g\in G,$ $x\in X$ and $y\in Y.$ We say that $Y$ is a \emph{factor} of $X,$ if there is a homomorphism from $X$ onto $Y.$ Every $G$-flow has a minimal subflow, a minimal closed subspace of $X$ invariant under the action of $G.$ Among all minimal $G$-flows, there is a maximal one - the \emph{universal minimal flow} $M(G).$ It means that every other minimal $G$-flow is a factor of $M(G).$ In the study of universal minimal flows, a construction of the greatest ambit turns out to be useful. An \emph{ambit} is a $G$-flow $X$ with a distinguished point $x_0\in X$ whose orbit $Gx_0=\{gx_0:g\in G\}$ is dense in $X.$ Likewise for minimal flows, there is a maximal ambit - the \emph{greatest ambit} $(S(G),e).$ It means that every other ambit $(X,x_0)$ is a factor of $S(G)$ via a quotient mapping sending $e$ to $x_0.$ As we say below, the greatest ambit is a compactification of $G$ with a structure of a right-topological semigroup and the universal minimal flow is a minimal left ideal of $S(G).$ For introduction to topological dynamics see \cite{dV}.

\subsection*{Greatest ambit} 
If $G$ is a discrete group, then its greatest ambit is the ultrafilter dynamical system $\ult(G).$ It is the space of all ultrafilters on $G$ with the topology generated by clopen sets $A^*=\{u\in\ult(G):A\in u\}.$ The action of $G$ on $\ult(G)$ extends the multiplication in $G:$ $gu=\{gA:A\in u\}$ for $g\in G$ and $u\in\ult(G).$ Moreover, we can extend the multiplication to all of $\ult(G),$ turning $\ult(G)$ into a semigroup: for $u,v\in \ult(G),$ we set $uv=\{A\subset G: \{g\in G: g^{-1}[A]\in v\}\in u\}.$ In other words, $uv=u-{\rm lim} \{gv:g\in G\}.$ Fixing an ultrafilter $u,$ the right multiplication $\cdot u: \ult(G)\to \ult(G),$ $v\mapsto uv$ is continuous, hence $\ult(G)$ is a \emph{right-topological semigroup}.

If $G$ is a topological group, then the greatest ambit of $G$ is a factor of $\ult(G)$ by definition. It has first been described by Samuel in 1948 (see \cite{S}) in the setting of uniform spaces and seems to be overlooked by topological dynamists. We shortly remind his construction for topological groups with the right uniformity:
Let $G$ be a topological group with the neutral element $e$ and $\mathcal{N}$ a basis of neighbourhoods of $e$ giving the topology of $G.$ The \emph{right uniformity} on $G$ is generated by covers $\{Va:a\in G\}$ for $V\in\mathcal{N}.$ We define an equivalence relation $\sim$ on $\ult(G)$ as follows:
For an ultrafilter $u\in\ult(G),$ we define a filter $u^*$ to be generated by $\{VA:A\in u, V\in\mathcal{N}\}.$ Then for $u,v\in\ult(G)$ we set $u\sim v$ if and only if $u^*=v^*.$ The quotient space $S(G)=\ult(G)/\sim$ with the quotient topology is called the Samuel compactification of $G$ and it is the greatest ambit of $G.$ The multiplication on $\ult(G)$ also factors to $S(G),$ making $S(G)$ a right-topological semigroup with the multiplication extending the multiplication on $G.$

\textbf{From now on},  $G$ will always be a topological group that possesses a basis $\mathcal{N}$ of open neighboorhoods of the neutral element $e$ consisting of open (hence clopen) subgroups of $G$. It is easy to see  that then $L=\{VA:A\subset G, V\in \mathcal{N}\}$ is a Boolean algebra and we get the following description of the greatest ambit.

\begin{lemma}
The greatest ambit $S(G)$ is equal to the Stone space of the Boolean algebra $L$ as above with the action defined by $gx=\{gA:A\in x\}$ for $g\in G$ and $x\in \ult(L).$ 
\end{lemma}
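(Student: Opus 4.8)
The plan is to exhibit an explicit isomorphism of ambits between $S(G)=\ult(G)/\sim$ and the Stone space $\ult(L)$, obtained from the restriction map dual to the inclusion $L\hookrightarrow\mathcal{P}(G)$. Throughout I use the reformulation $L=\{A\subseteq G: VA=A \text{ for some } V\in\mathcal{N}\}$, i.e. $L$ consists of those sets that are unions of right cosets of some member of $\mathcal{N}$; this is the ``easy to see'' observation, since $VA$ is precisely a union of right $V$-cosets and, $\mathcal{N}$ being a filter base of subgroups, closure under complements and finite unions follows by passing to a common smaller subgroup.

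First I would record that $L$ is invariant under the left translations $A\mapsto gA$. If $VA=A$ with $V\in\mathcal{N}$, then $gA=(gVg^{-1})(gA)$, and $gVg^{-1}$ is an open subgroup of $G$ containing $e$; choosing $W\in\mathcal{N}$ with $W\subseteq gVg^{-1}$ gives $W(gA)=gA$, so $gA\in L$. This translation invariance is exactly what makes the formula $g\cdot x=\{gA:A\in x\}$ define an action of $G$ on $\ult(L)$.

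The heart of the argument is the identification of the fibres of the restriction map $\rho:\ult(G)\to\ult(L)$, $\rho(u)=u\cap L$, with the classes of $\sim$. I would show that $u^*$ is exactly the filter on $G$ generated by the ultrafilter $u\cap L$ of $L$. Indeed, every generator $VA$ (with $A\in u$, $V\in\mathcal{N}$) lies in $L$ and contains $A\in u$, hence $VA\in u\cap L$; conversely any $B\in u\cap L$ satisfies $B=VB\in u^*$ for a suitable $V$, so $u\cap L\subseteq u^*$. Consequently $u^*\cap L=u\cap L$: if $B\in L$ lies above a finite intersection of generators, each of which is in $u$, then $B\in u$. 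From $u^*=\langle u\cap L\rangle$ it follows that $u^*=v^*$ if and only if $u\cap L=v\cap L$, so $\rho$ factors through a bijection $\bar\rho:S(G)\to\ult(L)$.

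It remains to check that $\bar\rho$ is an isomorphism of ambits. Continuity of $\rho$ is Stone duality, and $\bar\rho$ is continuous because $\ult(G)\to S(G)$ is a quotient map; since $S(G)$ is compact and $\ult(L)$ is Hausdorff, the continuous bijection $\bar\rho$ is a homeomorphism. Equivariance reduces to the computation $(gu)\cap L=\{gB:B\in u\cap L\}$, which follows at once from the translation invariance of $L$ established above, and the base point $e$ is sent to the principal ultrafilter $\{B\in L:e\in B\}$; compatibility with the semigroup multiplication is then inherited from $\ult(G)$. The only genuinely delicate points are the conjugation argument for translation invariance of $L$ and the identity $u^*=\langle u\cap L\rangle$; everything else is routine Stone-duality bookkeeping.
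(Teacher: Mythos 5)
Your proposal is correct and follows essentially the same route as the paper: both arguments hinge on showing that $u^*$ is the filter generated by the trace $u\cap L$ (using $VV=V$, equivalently your reformulation of $L$), so that $u\sim v$ iff $u\cap L=v\cap L$, which identifies $S(G)$ with $\ult(L)$. You simply spell out a few routine points the paper leaves implicit (translation invariance of $L$, the compact-Hausdorff homeomorphism argument, and equivariance).
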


\begin{proof}
Let $u, v \in\ult(G).$ Since $VV=V$ for every $V\in \mathcal{N},$  we have that $u\cap L$ is an ultrafilter on $L$ that generates $u^*$. It means that whenever $u\cap L=v\cap L,$ then $u\sim v.$ On the other hand, if $u\cap L\neq v\cap L,$ then there is $A\in L$ such that $A\in u\cap L$ and $G\setminus A\in v\cap L,$ which implies that $u^*\neq v^*.$ This gives us the sought for isomorphism between $S(G)$ and ultrafilters on $L.$
\end{proof}
 
The quotient multiplication on $\ult(L)$ can be described explicitely as an analogue of the multiplication on $\ult(G).$

\begin{lemma}
Let $u,v\in\ult(L),$ $A\subset G$ and $V\in\mathcal{N}.$ Then $VA\in uv$ if and only if $\{g\in G: g^{-1}[VA]\in v\}\in u.$ 
\end{lemma}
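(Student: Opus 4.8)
The plan is to compute the product in $\ult(L)=S(G)$ by lifting to $\ult(G)$, where multiplication is given by the explicit formula $\tilde u\tilde v=\{A\subset G:\{g:g^{-1}[A]\in\tilde v\}\in\tilde u\}$, and then to show that reduction modulo $L$ is compatible with this formula on the generator $VA$. Concretely, I would choose lifts $\tilde u,\tilde v\in\ult(G)$ with $\tilde u\cap L=u$ and $\tilde v\cap L=v$; these exist since the quotient map $\ult(G)\to\ult(L)$, $\tilde w\mapsto\tilde w\cap L$, is onto (every ultrafilter on the subalgebra $L$ extends to one on $\mathcal{P}(G)$). Because the multiplication on $\ult(G)$ factors through this quotient, $uv=(\tilde u\tilde v)\cap L$. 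Since $VA\in L$, this gives at once $VA\in uv\iff VA\in\tilde u\tilde v\iff\{g:g^{-1}[VA]\in\tilde v\}\in\tilde u$, and it remains to replace $\tilde v$ by $v$ and $\tilde u$ by $u$ in the last condition.

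Each replacement is justified once the relevant membership set is shown to lie in $L$. First, for each fixed $g$ the set $g^{-1}[VA]=g^{-1}VA$ is itself a member of $L$: using that $V$ is a subgroup, one checks it is a union of right cosets of the open subgroup $g^{-1}Vg$, hence a union of right cosets of any member of $\mathcal{N}$ contained in $g^{-1}Vg$, so it lies in $L$. Consequently $g^{-1}[VA]\in\tilde v\iff g^{-1}[VA]\in\tilde v\cap L=v$, so that the set $B:=\{g:g^{-1}[VA]\in\tilde v\}$ equals $\{g:g^{-1}[VA]\in v\}$, independently of the chosen lift $\tilde v$.

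Second, and this is the crux, I claim $B\in L$. The key computation is that $g\mapsto g^{-1}[VA]$ is constant on right cosets $Vg$: if $g'=vg$ with $v\in V$, then $g'^{-1}[VA]=\{h:vgh\in VA\}=\{h:gh\in v^{-1}VA\}=g^{-1}[VA]$, since $v^{-1}V=V$. Hence $g\in B$ forces $Vg\subseteq B$, so $B=VB$ is a union of right cosets of $V$ and therefore lies in $L$. With $B\in L$ in hand we get $B\in\tilde u\iff B\in\tilde u\cap L=u$, and chaining the equivalences yields $VA\in uv\iff B\in u$, which is exactly the assertion.

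The main obstacle is the verification that $B\in L$, i.e. the coset-invariance of $g\mapsto g^{-1}[VA]$; together with the companion fact $g^{-1}[VA]\in L$, it is what makes the quotient formula meaningful. Once these two facts are in place the remainder is routine bookkeeping with the quotient map, and the argument incidentally reconfirms that the value of $VA\in uv$ does not depend on the lifts $\tilde u,\tilde v$.
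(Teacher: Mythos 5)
Your proposal is correct and follows essentially the same route as the paper's proof: lift to $\ult(G)$, apply the ultrafilter multiplication formula, and then verify that both $g^{-1}[VA]$ and the return set $B=\{g:g^{-1}[VA]\in v\}$ lie in $L$ (via invariance under a conjugate subgroup from $\mathcal{N}$ and under left multiplication by $V$, respectively) so that the lifts can be replaced by $u$ and $v$. The two invariance computations you isolate are exactly the two steps in the paper's argument.
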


\begin{proof}
Let $u',v' \in\ult(G)$ be representatives of the equivalence classes of $u,v$ respectively. We need to show that the definition of $uv$ in the claim corresponds to $u' v' \cap L.$ By the definition of ultrafilter multiplication, $VA\in u' v' \cap L$ if and only if $U=\{g\in G: g^{-1}[VA]\in v'\}\in u'.$ First, we show that we can replace $u'$ with $u:$ Notice that $VU=U,$ since for every $s\in V$ and $g\in G,$ we have that $sg\in U$ if and only if $(sg)^{-1}[VA]=g^{-1}s^{-1}[VA]=g^{-1}[VA]\in v$ if and only if $g\in U.$ Therefore, $VA\in u' v' \cap L$ if and only if $U\in u' \cap L=u.$ Second, we show that $v'$ can be replaced by $v:$ For every $g\in G$ and $W\in\mathcal{N},$ $g^{-1}[VA]\in v'$ implies $Wg^{-1}[VA]\in v.$ Pick a $W\in\mathcal{N}$ such that $gWg^{-1}\subset V.$ Then $Wg^{-1}\subset g^{-1}V,$ so $Wg^{-1}VA\subset g^{-1}VVA=g^{-1}VA.$ But obviously, $g^{-1}VA\subset Wg^{-1}VA,$ hence $Wg^{-1}VA=g^{-1}VA$ which shows that $g^{-1}VA\in v'$ if and only if $g^{-1}VA\in v' \cap L=v,$ which concludes the proof.
\end{proof}

\subsection*{Universal minimal flow}

Let $G$ be a group with a basis of neighbourhoods $\mathcal{N}$ of the neutral element $e$ consisting of open subgroups and let $L$ be as above. The universal minimal flow $M(G)$ for $G$ being a subspace of the greatest ambit $S(G)$, is itself a Stone space. Hence we can consider its Boolean algebra of all clopen subsets $B(G).$ For $m\in M(G)$ and $\emptyset\neq U\in B(G),$ denote by $\ret(m,U)$ the set of elements of $G$ that return $m$ into $U,$ i.e. $\ret(m,U)=\{g\in G:gm\in U\}.$ Since $h\ret(m,U)=\ret(m,hU)$ and $M(G)$ is compact, there are finitely many $g_1,g_2,\ldots,g_n\in G$ such that $\bigcup_{i=1}^n g_i\ret(m,U)=G.$ Such sets are called \emph{syndetic}. More generally:

\begin{lemma}
The following are equivalent for a $G$-flow $X$
\item[(i)] $X$ is minimal,
\item[(ii)] for every non-empty open set $O\subset X,$ $\bigcup_{g\in G} gO=X,$
\item[(iii)] for every $x\in X$ and non-empty open set $O\subset X,$ the set $\ret(x, O)$ is syndetic.
\end{lemma}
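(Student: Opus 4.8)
The plan is to prove the three equivalences by establishing a cycle of implications, (i) $\Rightarrow$ (ii) $\Rightarrow$ (iii) $\Rightarrow$ (i), each of which exploits the compactness of $X$ together with the basic definition of a minimal flow as a closed $G$-invariant subspace admitting no proper closed $G$-invariant subspace.

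\medskip

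First I would prove (i) $\Rightarrow$ (ii). Suppose $X$ is minimal and let $O\subset X$ be non-empty and open. The set $\bigcup_{g\in G}gO$ is an open, $G$-invariant subset of $X$, so its complement $C=X\setminus\bigcup_{g\in G}gO$ is a closed $G$-invariant subset. If $C$ were non-empty it would be a $G$-flow in its own right and would contain a minimal subflow, contradicting the minimality of $X$ (equivalently, $C\subsetneq X$ is a proper closed invariant set, so by minimality $C=\emptyset$); note $C\ne X$ since $O\ne\emptyset$. Hence $\bigcup_{g\in G}gO=X.$

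\medskip

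Next I would prove (ii) $\Rightarrow$ (iii). Fix $x\in X$ and a non-empty open $O\subset X.$ By (ii) applied to $O$ we have $\bigcup_{g\in G}gO=X,$ which says that the open sets $\{gO:g\in G\}$ cover the compact space $X.$ Extracting a finite subcover, there are $g_1,\dots,g_n\in G$ with $X=\bigcup_{i=1}^n g_iO.$ Now I would verify that $g^{-1}\ret(x,O)=\ret(x,g^{-1}O),$ exactly as in the computation $h\,\ret(m,U)=\ret(m,hU)$ recorded before the lemma: indeed $h\in\ret(x,g^{-1}O)$ means $hx\in g^{-1}O,$ i.e. $ghx\in O,$ i.e. $gh\in\ret(x,O).$ Applying this with the finite cover, for any $h\in G$ there is some $i$ with $hx\in g_iO,$ whence $g_i^{-1}h\in\ret(x,O)$ and thus $h\in g_i\,\ret(x,O).$ Therefore $G=\bigcup_{i=1}^n g_i\,\ret(x,O),$ so $\ret(x,O)$ is syndetic.

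\medskip

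Finally I would prove (iii) $\Rightarrow$ (i), which I expect to be the main obstacle, since here one must produce a \emph{dense orbit} rather than merely manipulate covers. Let $x\in X$ be arbitrary and let $Y=\overline{Gx}$ be the closure of its orbit, which is a closed $G$-invariant subflow; to conclude minimality of $X$ it suffices to show $Y=X,$ i.e. that every orbit is dense. Suppose toward a contradiction that some $y\in X$ lies outside $Y,$ and choose a non-empty open $O$ with $y\in O$ and $O\cap Y=\varnothing$ (using that $Y$ is closed and $X$ Hausdorff). By (iii), $\ret(x,O)$ is syndetic, so there exist $g_1,\dots,g_n$ with $G=\bigcup_i g_i\,\ret(x,O);$ in particular $\ret(x,O)\ne\varnothing,$ meaning there is $g$ with $gx\in O.$ But $gx\in Gx\subset Y,$ contradicting $O\cap Y=\varnothing.$ Hence every orbit closure equals $X,$ giving minimality. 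The delicate point to handle carefully is the direction of the argument: syndeticity of the return set is genuinely stronger than non-emptiness, so while mere non-emptiness already suffices for the contradiction above, one should make sure the hypothesis (iii) is invoked for \emph{every} $x$ and \emph{every} non-empty open $O,$ which is precisely what forces all orbits to be dense and hence $X$ minimal.
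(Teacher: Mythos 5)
Your proposal is correct and follows essentially the same route as the paper: the complement of $\bigcup_g gO$ is a proper closed invariant set for (i)$\Rightarrow$(ii), compactness plus the identity $g\ret(x,O)=\ret(x,gO)$ for (ii)$\Rightarrow$(iii), and for (iii)$\Rightarrow$(i) the observation that a point outside some orbit closure yields a non-empty open set with empty (hence non-syndetic) return set. The only cosmetic difference is that you phrase the last step as a direct contradiction while the paper argues contrapositively; the content is identical.
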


\begin{proof}
$(i)\Rightarrow (ii)$
Let $X$ be a minimal $G$-flow, $x\in X$ and $O$ a non-empty open subset of $X.$ If $X\setminus \bigcup_{g\in G}gO\neq\emptyset,$ then it is a non-trivial closed subflow witnessing non-minimality of $X$. 

$(ii)\Rightarrow (iii)$
Since $X$ is compact, the cover $\{gO:g\in G\}$ has a finite subcover $\{g_1O,g_2O,\ldots,g_nO\}.$ Then $\bigcup_{i=1}^n g_i\ret(x,O)=\bigcup_{i=1}^n\ret(x,g_iO)=G,$ so $\ret(x,O)$ is syndetic.

$(iii)\Rightarrow (i)$
If $X$ is not minimal, then there is an	$x\in X$ such that $O=X\setminus \overline{Gx}$ is a non-empty open set. But then $\ret(x,O)=\emptyset,$ hence not syndetic.
\end{proof}

A Boolean algebra of subsets of $G$ is called a \emph{syndetic algebra}, if it is invariant under left translations by elements from $G$ and all of its non-empty elements are syndetic sets.

Now, we are ready to imitate a proof for discrete semigroups from \cite{BF} to characterize $B(G)$.

\begin{theorem}
The universal minimal flow $M(G)$ is the Stone space of a maximal syndetic subalgebra of $L.$ All maximal syndetic subalgebras of $L$ are isomorphic.
\end{theorem}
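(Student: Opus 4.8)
The plan is to set up a correspondence between the minimal subflows of $S(G)=\ult(L)$ and the syndetic subalgebras of $L$, and then to extract maximality and uniqueness from the semigroup structure of $S(G)$. To pass from a flow to an algebra I fix a point $m_0\in M(G)$ and consider the map $\Phi\colon B(G)\to\mathcal{P}(G)$ given by $\Phi(U)=\ret(m_0,U)$. Each clopen $U\subseteq M(G)$ is the trace on $M(G)$ of some $A^*$ with $A\in L$, and such an $A$ satisfies $VA=A$ for some $V\in\mathcal{N}$; hence $A^*$, and therefore $U$, is $V$-invariant, which forces $V\ret(m_0,U)=\ret(m_0,U)$, i.e.\ $\Phi(U)\in L$. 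A direct check shows that $\Phi$ is a Boolean homomorphism, that $h\Phi(U)=\Phi(hU)$ so its image is left-invariant, that it is injective because $Gm_0$ is dense in the minimal flow $M(G)$, and that every nonempty value is syndetic by the previous lemma. Thus $B_0:=\Phi(B(G))$ is a syndetic subalgebra of $L$ isomorphic to $B(G)$, whose Stone space is $M(G)$. Conversely, for any syndetic subalgebra $A\subseteq L$ the inclusion $A\hookrightarrow L$ dualizes to a $G$-equivariant surjection $S(G)\to X_A$ onto the Stone space $X_A$ of $A$, and syndeticity together with clause (ii) of the minimality lemma shows $X_A$ is a minimal flow; so syndetic subalgebras are exactly the algebras of clopen sets of the minimal factors of $S(G)$.

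To prove maximality I would specialize $m_0=u$ to a minimal idempotent of the minimal left ideal $M(G)$, so that right multiplication $r\colon x\mapsto xu$ is a continuous, $G$-equivariant retraction of $S(G)$ onto $M(G)$ with $r|_{M(G)}=\mathrm{id}$ (the last because every $x\in M(G)=S(G)u$ has the form $x=yu$, whence $xu=x$). This retraction is precisely the map dual to the inclusion $B_0\hookrightarrow L$. Now suppose $A\supseteq B_0$ is a syndetic subalgebra; let $q\colon S(G)\to X_A$ be the quotient and $\pi\colon X_A\to M(G)$ the map dual to $B_0\hookrightarrow A$, so that $\pi\circ q=r$. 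Restricting to $M(G)$ yields $\pi\circ q|_{M(G)}=\mathrm{id}_{M(G)}$, so $q|_{M(G)}$ is injective; it is also onto $X_A$, since its image is a nonempty closed invariant subset of the minimal flow $X_A$. A continuous bijection of a compact space onto a Hausdorff space is a homeomorphism, and it is $G$-equivariant, so $q|_{M(G)}$ is an isomorphism of flows, forcing $\pi$ to be an isomorphism and hence $A=B_0$. Therefore $B_0$ is a maximal syndetic subalgebra of $L$ and $M(G)$ is its Stone space.

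For the uniqueness I would use the standard fact that all minimal left ideals of the right-topological semigroup $S(G)$ are isomorphic as $G$-flows; consequently the algebras $B_0$ obtained from different minimal left ideals are mutually isomorphic, all isomorphic to $B(G)$. It then remains to show that an \emph{arbitrary} maximal syndetic subalgebra $A$ arises in this way, equivalently that the canonical surjection $q|_{M(G)}\colon M(G)\to X_A$ is injective, for then $X_A\cong M(G)$ and Stone duality gives $A\cong B(G)$. This injectivity is the step I expect to be the main obstacle: syndeticity is characterized by meeting every minimal left ideal of $S(G)$, so it is not preserved under adjoining a single element, and one cannot contradict maximality by enlarging $A$ naively. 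I would instead attack it with the idempotent structure, as in the maximality argument: inside the closed subsemigroup $\{s\in S(G):s\cdot q(e)=q(e)\}$ stabilizing the base point of $X_A$ one locates an idempotent and uses it, together with a minimal left ideal of the enveloping semigroup of $X_A$, to build a $G$-equivariant section of $q|_{M(G)}$; this realizes $X_A$ as a minimal left ideal of $S(G)$ and yields $X_A\cong M(G)$. Carrying out this realization cleanly — equivalently, establishing the coalescence of the universal minimal flow that it amounts to — is where the real work of the proof lies.
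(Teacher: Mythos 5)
Your first two paragraphs are correct and in fact tighten the paper's argument for the first assertion. The paper likewise realizes $B(G)$ inside $L$ as $\{\ret(m,O):O\in B(G)\}$ via the dual of the right translation $R_m$, but for maximality it only observes that a syndetic algebra $\mathcal{B}\supseteq\mathcal{A}$ gives a minimal flow $\ult(\mathcal{B})$ mapping onto $M(G)$ and appeals to universality to conclude $\ult(\mathcal{B})\cong\ult(\mathcal{A})$; your choice of an idempotent base point $u$, which makes $\pi\circ q|_{M(G)}$ literally the identity rather than merely some endomorphism of $M(G)$, yields $A=B_0$ directly without invoking coalescence. So the first sentence of the theorem is fully established by your argument.

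The gap is in the second sentence. You correctly isolate the remaining task — showing that an arbitrary maximal syndetic subalgebra $\mathcal{A}$ of $L$ has Stone space $M(G)$ — but you then head for the hardest possible route (injectivity of $q|_{M(G)}$, sections built from idempotents, coalescence) and explicitly leave it unfinished. The paper's resolution is much shorter and uses maximality of $\mathcal{A}$ itself rather than any structure theory of $S(G)$: since $\ult(\mathcal{A})$ is minimal, universality gives a $G$-map $\phi:M(G)\to\ult(\mathcal{A})$; let $p=\{A\in\mathcal{A}:e\in A\}$ and pick any $m\in\phi^{-1}(p)$. A one-line computation gives $\ret(p,\hat{A})=A$ for every $A\in\mathcal{A}$, and equivariance of $\phi$ gives $\ret(p,\hat{A})=\ret(m,\phi^{-1}[\hat{A}])$, so $\mathcal{A}\subseteq\{\ret(m,O):O\in B(G)\}=:\mathcal{A}_m$. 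By your own first paragraph $\mathcal{A}_m$ is a syndetic subalgebra isomorphic to $B(G)$ (the map $O\mapsto\ret(m,O)$ is injective for \emph{any} $m\in M(G)$ because $Gm$ is dense by minimality — no idempotent is needed here), so maximality of $\mathcal{A}$ forces $\mathcal{A}=\mathcal{A}_m\cong B(G)$. In particular the question of whether $\phi$ is injective never arises and no coalescence is required. Without some such argument your proof establishes only that the particular algebra $B_0$ is a maximal syndetic subalgebra with Stone space $M(G)$, not that every maximal syndetic subalgebra is isomorphic to it.
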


\begin{proof}
Let $B(G)$ denote the algebra of clopen subsets of $M(G)$ and let $m\in M(G).$ Since $M(G)$ is a minimal left ideal of $S(G),$ $S(G)m=M(G),$ so the right translation $R_m$ by $m$ maps $S(G)$ onto $M(G)$. $S(G)$ being a right-topological semigroup, $R_m$ is continuous and being morover onto,  it induces a dual injective homomorphism $\rho_m:B(G)\to L.$ By the claim above, we know that for every $VA\in L,$ $\rho_m((VA)^*\cap M(G))= \{u\in S(G):VA\in um\}=\{g\in G: g^{-1}[VA]	\in m\}=\ret(m, (VA)^*).$ So $B(G)$ is isomorphic to $\mathcal{A}=\{\ret(m, (VA)^*): A\subset G, V\in\mathcal{N}\},$ hence a subalgebra of $L$ consisting of syndetic sets only. Now, we show that $\mathcal{A}$ is invariant under left tranlations by $G:$ Let $h\in G,$ then $h\ret(m, (VA)^*)=h\{g\in G:g^{-1}[VA]\in m\}=\{x\in G:x^{-1}[gVA]\}=\ret(m,(gVA)^*).$ It remains to show that $\mathcal{A}$ is a maximal syndetic algebra. Let $\mathcal{B}\supset \mathcal{A}$ be a syndetic algebra. Then $\ult(\mathcal{B})$ with multiplication $gu=\{gA:A\in u\}$ for $g\in G$ and $u\in \ult(\mathcal{B})$ is a minimal flow. The identity embedding of $\mathcal{A}$ into $\mathcal{B}$ induces a $G$-homomorphism from $\ult(\mathcal{B})$ onto $\ult(\mathcal{A})\cong M(G).$ By universality, $\ult(\mathcal{B})\cong \ult(\mathcal{A}),$ which in turn implies $\mathcal{A}\cong\mathcal{B}.$

For the other part of the theorem, let us assume that $\mathcal{A}$ is a maximal syndetic subalgebra of $L.$ To achieve the conclusion, it is enough to find an $m\in M(G)$ such that $\mathcal{A}=\{\ret(m, O):O\in B(G)\}.$ $\ult(\mathcal{A})$ with $G$-multiplication as above is a minimal $G$-flow. Hence there is a $G$-homomorphism $\phi:M(G)\to \ult(\mathcal{A}).$ Consider $p\in \ult(\mathcal{A})$ given by $p=\{A\in \mathcal{A}:e\in A\}$ and its preimage $m$ under $\phi.$ Denote by $\hat{A}$ the clopen set $\{u\in \ult(\mathcal{A}):A\in u\}.$ Then $\ret(p,\hat{A})=\ret(m,\phi^{-1}[A]),$ since $\phi(gm)=g\phi(m)=gp$ for all $g\in G.$ But also $\ret(p,\hat{A})=\{g\in G: gp\in\hat{A}\}=\{g\in G: g^{-1}[A]\in p\}=\{g\in G: e\in g^{-1}[A]\}=\{g\in G: ge\in A\}=A.$ So we have that $\mathcal{A}=\{\ret(p,\hat{A}):A\in \mathcal{A}\}\subset \{\ret(m, O):O\in B(G)\}.$ By maximality of $\mathcal{A},$ we get that $\mathcal{A}=\{\ret(m, O):O\in B(G)\}.$

\end{proof}

\begin{corollary}
$M(G)$ is a totally disconnected space.
\end{corollary}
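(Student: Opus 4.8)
The plan is to read this off directly from the theorem. The theorem just established that $M(G)$ is (homeomorphic to) the Stone space $\ult(\mathcal{A})$ of a maximal syndetic subalgebra $\mathcal{A}\subset L$. For any Boolean algebra, its Stone space carries, by construction, the topology generated by the clopen sets $\hat{A}=\{u:A\in u\}$, and these clopen sets form a basis. So the only work is to recall the standard fact that a compact Hausdorff space possessing a basis of clopen sets is totally disconnected, and then apply it.

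The argument I would give is as follows. First I would invoke the theorem to identify $M(G)$ with $\ult(\mathcal{A})$ for a maximal syndetic $\mathcal{A}$, so that $M(G)$ has a basis consisting of the clopen sets $\hat{A}$. Next I would verify zero-dimensionality explicitly if desired: given two distinct points $u\neq v$ in $M(G)$, there is some $A\in\mathcal{A}$ with $A\in u$ but $A\notin v$, whence $\hat{A}$ is a clopen neighbourhood of $u$ not containing $v$; thus no connected subset can contain both points. Since the connected component of any point is contained in every clopen set through that point, and the clopen sets separate points, every connected component is a singleton. Therefore $M(G)$ is totally disconnected.

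I do not expect any genuine obstacle here; the statement is an immediate consequence of the theorem together with Stone duality. The only thing to be careful about is the phrasing, namely making clear that \emph{total disconnectedness} for the compact Hausdorff space $M(G)$ coincides with \emph{zero-dimensionality}, i.e.\ the existence of a clopen basis, which is exactly what the Stone space representation supplies. A one-line proof citing the theorem and the fact that Stone spaces are totally disconnected would suffice, but I would include the separation-by-clopen-sets remark above for completeness.
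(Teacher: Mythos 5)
Your proof is correct and matches the paper's (implicit) argument exactly: the corollary is stated without proof precisely because it follows immediately from the theorem identifying $M(G)$ with the Stone space of a Boolean algebra, and Stone spaces are totally disconnected since their clopen sets separate points. Your explicit verification via the sets $\hat{A}$ is just the standard justification spelled out.
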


\section{Automorphism groups}
Let $\kappa$ be a cardinal number endowed with the discrete topology and denote by $S_{\kappa}$ the group of all bijections on $\kappa.$ In what follows, we consider $S_{\kappa}$ as a topological group with the topology of pointwise convergence. The topology is given by a basis of neighbourhoods of the neutral element consisiting of open subgroups $S_A=\{g\in S_{\kappa}:g(a)=a, a\in A\}$ where $A$ is a finite subset of $\kappa.$ We can observe that a subset  $H$ of $S_{\kappa}$ is closed if and only if it contains every $g\in S_{\kappa}$ such that for any $A\subset\kappa$ finite there exists an $h\in H$ with $g|A=h|A.$

Let $G\leq S_{\kappa}$ be a  subgroup and let $A$ be a finite subset of $\kappa.$ We denote the point-wise stabilizer of $A$ as :
$$
G_A=\{g\in G: ga=a, a\in A\}
$$
and the set-wise stabilizer as:
$$
G_{(A)}=\{g\in G: gA=A\}.
$$

Let $L$ be a first order language and $\mathcal{A}$ an $L$-structure with the universe of cardinality $\kappa.$ Then the group of automorphisms of $\mathcal{A},$ $\aut(\mathcal{A})$ is a closed subgroup of $S_{\kappa}.$ It is clear that the topology on $\aut(\mathcal{A})$ is given by  $\aut(\mathcal{A})_A$ for $A$ a finitely-generated substructure of $\mathcal{A}.$ If every partial isomorphism between two finite subsets of $\mathcal{A}$ can be extended to an automorphism of the whole structure $\mathcal{A},$ then we say that $\mathcal{A}$ is \emph{$\omega$-homogeneous}. In what follows, when we say ``a structure'', we mean a structure for some first order language.

We finally describe the correspondence between groups possessing a neighbourhood basis of the neutral element of open subgroups, dense subgroups of groups of automorphisms of structures and subgroups of $S_{\kappa}.$

\begin{theorem}
Let $G$ be an infinite topological group and let $\kappa$ be a cardinal number. Then the following are equivalent:
\begin{itemize}
\item[(a)] $G$ is a subgroup of $S_{\kappa},$
\item[(b)] $G$ has a basis $\mathcal{N}$ of neighbourhoods of the neutral element of cardinality $\lambda\leq \kappa$ consisting of open subgroups such that the family of all left translates of elements from $\mathcal{N}$ also has cardinality $\lambda,$
\item[(c)] $G$ is a dense subgroup of a group of automorphisms of an $\omega$-homogeneous relational structure on a set of cardinality $\kappa,$
\item[(d)] $G$ is a dense subgroup of a group of automorphisms of a structure on a set of cardinality $\kappa.$
\end{itemize}
\end{theorem}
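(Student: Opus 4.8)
The plan is to prove the equivalences by establishing a cycle of implications, roughly $(a)\Rightarrow(b)\Rightarrow(c)\Rightarrow(d)\Rightarrow(a)$, since each neighboring pair is the most natural to connect directly. Throughout I would lean on the fact, already recorded in the excerpt, that for subgroups of $S_\kappa$ the stabilizers $S_A$ of finite sets $A$ form a neighbourhood basis of the identity consisting of open subgroups, and on the closure characterization: a subgroup $H\leq S_\kappa$ is closed exactly when it contains every $g$ that agrees on each finite set with some element of $H$.

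\medskip

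First I would prove $(a)\Rightarrow(b)$. Given $G\leq S_\kappa$, the sets $G_A=G\cap S_A$ for $A$ a finite subset of $\kappa$ form a basis $\mathcal{N}$ of open subgroups. Since finite subsets of $\kappa$ number exactly $\kappa$ (as $\kappa$ is infinite), $|\mathcal{N}|\leq\kappa$, giving the cardinal $\lambda\leq\kappa$. The left translates $gG_A$ are cosets, and the number of cosets of each $G_A$ is bounded by the number of ways $G$ can move the finite set $A$, which is again at most $\kappa$; summing over the $\lambda$-many basic subgroups keeps the total number of left translates at $\lambda$. For $(b)\Rightarrow(c)$, this is the heart of the argument: I would build a relational structure whose automorphism group realizes $G$. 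The idea is to let $G$ act on itself by left multiplication and encode the neighbourhood basis combinatorially, or equivalently to use the left cosets of the groups in $\mathcal{N}$ as the underlying points. One introduces, for each $V\in\mathcal{N}$ and each pair of cosets, a relation recording the coset structure; $\omega$-homogeneity is arranged so that every finite partial isomorphism extends, and the cardinality bound from $(b)$ guarantees the universe has size $\kappa$. One then checks that $G$ embeds densely into the automorphism group of this structure, using that the basic open subgroups of $G$ correspond exactly to pointwise stabilizers of finite configurations.

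\medskip

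The step $(c)\Rightarrow(d)$ is immediate, since an $\omega$-homogeneous relational structure is in particular a structure, so nothing needs to be done beyond noting the containment of classes. Finally, for $(d)\Rightarrow(a)$, given that $G$ is a dense subgroup of $\aut(\mathcal{A})$ for a structure $\mathcal{A}$ on a set of cardinality $\kappa$, I would observe that $\aut(\mathcal{A})$ is by definition a subgroup of the symmetric group on the universe of $\mathcal{A}$, which is a copy of $S_\kappa$; hence $G\leq\aut(\mathcal{A})\leq S_\kappa$, closing the cycle.

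\medskip

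The main obstacle will be $(b)\Rightarrow(c)$: turning an abstract neighbourhood basis of open subgroups into a concrete $\omega$-homogeneous relational structure of the correct cardinality whose automorphism group densely contains $G$. The delicate points are (i) choosing relations rich enough that the stabilizers of finite sets in the constructed structure match the given subgroups in $\mathcal{N}$ exactly, so that the topologies agree, and (ii) verifying $\omega$-homogeneity, which requires that any isomorphism between finite substructures be induced by the coset data in a way that extends to a global automorphism. The cardinality control supplied by the hypothesis in $(b)$ is precisely what keeps the universe of the structure at size $\kappa$ rather than overshooting, and the density of $G$ in the resulting automorphism group will follow from the fact that $G$ already separates the finite configurations that the basic open subgroups distinguish.
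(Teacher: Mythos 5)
Your cycle $(a)\Rightarrow(b)\Rightarrow(c)\Rightarrow(d)\Rightarrow(a)$ places all the weight on $(b)\Rightarrow(c)$, and that implication is exactly the one you do not prove: you describe the structure to be built ("one introduces\ldots a relation recording the coset structure"), assert that "$\omega$-homogeneity is arranged," and then list the verification of homogeneity and of the matching of topologies as "delicate points" to be dealt with later. That is a genuine gap, not a routine omission. Homogeneity cannot simply be arranged: the standard way to get an $\omega$-homogeneous relational structure whose automorphism group has $G$ as a dense subgroup is the canonical-structure construction, where one adds an $n$-ary relation symbol for each orbit of the closure $\overline{G}\leq S_{\kappa}$ on $n$-tuples; two finite tuples then satisfy the same relations iff they lie in the same orbit, which is precisely what makes partial isomorphisms extend. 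But to speak of orbits on $\kappa$-tuples and of the closure of $G$ in $S_{\kappa}$ you already need the permutation representation, i.e.\ you need $(a)$ first. So your $(b)\Rightarrow(c)$, if carried out honestly, factors as $(b)\Rightarrow(a)\Rightarrow(c)$ anyway. This is in fact how the paper organizes things: the equivalence of $(a)$, $(c)$ and $(d)$ is quoted from Hodges (Theorem 4.1.1 of \cite{H}), and the only implication proved in detail is $(b)\Rightarrow(a)$, via the Becker--Kechris device of letting $G$ act by left translation on the $\lambda\leq\kappa$ left translates $\{U_i:i\in\lambda\}$ of the basis subgroups, sending $g$ to the permutation $\pi_g$ with $\pi_g(i)=j$ iff $gU_i=U_j$, and checking this is a homeomorphic embedding into $S_{\kappa}$. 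I would recommend restructuring along those lines rather than attempting a direct $(b)\Rightarrow(c)$.

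Two smaller points. First, your counting in $(a)\Rightarrow(b)$ ("summing over the $\lambda$-many basic subgroups keeps the total number of left translates at $\lambda$") is not literally correct: a single basic subgroup can have more left translates than there are basic subgroups (take $G$ an uncountable discrete subgroup of $S_{\kappa}$, where $\mathcal{N}=\{G_A\}$ may reduce to $\{G,\{e\}\}$ while $\{e\}$ has $|G|$ translates). What is true, and what the statement needs, is that the translates of the $G_A$ are indexed by orbits of finite tuples, hence number at most $\kappa$; one then chooses $\mathcal{N}$ so that the two cardinalities coincide. Second, your $(c)\Rightarrow(d)$ and $(d)\Rightarrow(a)$ are fine, provided you note in the latter that the topology of pointwise convergence on $\aut(\mathcal{A})$ is the subspace topology inherited from $S_{\kappa}$, so that $G\leq S_{\kappa}$ as a \emph{topological} group and not merely as an abstract one.
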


\begin{proof}
The equivalence of $(a),(c)$ and $(d)$ follows from the Theorem $4.1.1$ in \cite{H}. $(b)$ trivially follows from $(a),$ so we only need to establish that also $(b)$ implies $(a).$ We proceed as in Theorem $1.5.1$ in \cite{BK}. Let $\{U_i:i\in\lambda\}$ be an enumeration of a basis for the topology on $G$ given by all left translates of subgroups from $\mathcal{N}.$ For every $g\in G,$ define $\phi(g)=\pi_g\in S_{\kappa}$ as follows
$$
\pi_g(i)=j \Leftrightarrow gU_i=U_j {\rm \ for\ } i\in\lambda {\rm \ and \ } \pi_g(i)=i {\rm \ otherwise.}
$$
Obviously, $g\mapsto \pi_g$ is an injective homomorphism of $G$ into $S_{\kappa}.$ To show that it is continuous, let $A\subset\lambda$ be finite and let  $S_{\kappa,A}=\{\pi\in S_{\kappa}:\pi(i)=i, i\in A\}$ be a basic open subgroup of $S_{\kappa}.$	Then $\phi^{-1}(S_{\kappa,A})=\{g\in G:gU_i=U_i, i\in A\}.$ Since for every $i\in A,$ $U_i=h_iV_i$ for some $h_i\in G$ and some subgroup $V_i\in\mathcal{N},$ $\phi^{-1}(S_{\kappa,A})=\bigcap_{i\in A}\{g\in G: gh_iV_i=h_iV_i\}=\bigcap_{i\in A}\{g\in G:h_i^{-1}gh\in V_i\}=\bigcap_{i\in A}\{g\in G: g\in h_iV_ih_i^{-1}\},$ which is an intersection of finitely many open set, hence open. Similarly, let $H\in\mathcal{N}$ be an open subgroup of $G,$ so $H=U_i$ for some $i\in \lambda.$ Then $\phi(H)=\{\pi_h:h\in H\}=\{\pi\in S_{\kappa}:\pi(i)=i\}\cap \phi(G),$ hence $\phi$ is a homeomorphism onto its image.
\end{proof}

\section{Fra\"iss\'e classes, Ramsey theory and linear orderings}

Now, we give necessary definitions and facts about  Fra\"iss\'e classes, the Ramsey property for finite structures and linear orderings. For a comprehensive treatise of these ingredients see \cite{KPT}.

\subsection*{Fra\"iss\'e classes}\label{F}
A class of finitely-generated structures $\mathcal{F}$ of a given language is called a \emph{Fra\"iss\'e class}, if it satisfies the following conditions:
\begin{itemize}
\item[(HD)] Hereditary property :  if $A$ is a finitely generated substructure of $B$ and $B\in\mathcal{F},$ then also $A\in \mathcal{F}.$
\item[(JEP)] Joint embedding property : if $A,B\in \mathcal{F}$ then there exists a $C\in\mathcal{F}$ in which both $A$ and $B$ embed.
\item[(AP)] Amalgamation property: if $A,B,C\in\mathcal{F}$ and $i:A\to B$ and $j:A\to C$ are embeddings, then there exist $D\in\mathcal{F}$ and emebddings $k:B\to D$ and $l:C\to D$ such that $k\circ i=l\circ j.$
\end{itemize}

Let $\mathcal{A}$ be an $\omega$-homogeneous structure. Then it is easily verified that $\age(\mathcal{A}),$ the class of all finitely-generated substructures of $\mathcal{A},$ is a Fra\"iss\'e class. In case of countable structures, there is a one-to-one correspondence between $\omega$-homogeneous structures and Fra\"iss\'e classes (\cite{F}).

\begin{example} \label{E} The following are Fra\"iss\'e classes: 
\begin{itemize}
\item[(a)] finite sets
\item[(b)] linearly ordered finite graphs
\item[(c)] finite Boolean algebras
\item[(d)] finite vectors spaces over a finite field
\item[(e)] finite linear orderings
\end{itemize}
\end{example}

In what follows, we will only be interested in Fra\"iss\'e classes consisting of finite structures.

\subsection*{Ramsey theory}

 A class $\mathcal{K}$ of finite structures satisfies the \emph{Ramsey property} if for every $A\leq B\in\mathcal{K}$ and $k\geq 2$ a natural number there exists $C\in\mathcal{K}$ such that 
 $$
 C\to (B)^A_k,
 $$
 i.e. for every colouring of copies of $A$ in $C$ by $k$ colours, there is a copy $B'$ of $B$ in $C,$ such that all copies of $A$ in $B'$ have the same colour.

\begin{example}
All examples of Fra\"iss\'e classes in Example \ref{E}  satisfy the Ramsey property: $(a)$ is the classical Ramsey theorem, $(b)$ was proved by Abramson and Harrington \cite{AH} and Ne\v{s}et\v{r}il and R\"odl  \cite{NR}, $(c)$ is equivalent to the so-called dual Ramsey theorem by Graham and Rothschild \cite{GR}, $(d)$ was proved by Graham, Leeb, Rothschild in \cite{GLR} and $(e)$ is equivalent to $(a).$
\end{example}

\subsection*{Linear orderings}
Let $\lo(\kappa)$ denote the space of all linear orderings on $\kappa$ considered as a subspace of $2^{\kappa\times\kappa}$ with the product topology. The topology on $\lo(\kappa)$ is generated by 
$$(A,<)^*=\{<'\in \lo(\kappa):<'|A=<\},$$
for $A\subset\kappa$ finite and $<$ a linear ordering on $A.$

Let $\mathcal{A}$ be a structure of size $\kappa$ and let again $\age(\mathcal{A})$ denote the family of finitely-generated substructures of $\mathcal{A}.$ Let $L$ be the language of $\mathcal{A}$ and let $L'=L\cup \{<\}$ be an expansion of $L$ by a binary relational symbol $<$ not in $L.$ Let $\mathcal{K}$ be a family of structures for $L'$ in which $<$ is a linear ordering. Suppose that $\age(\mathcal{A})$ is a reduct of $\mathcal{K},$ i.e. $\age(\mathcal{A})=\{K|L:K\in\mathcal{K}\}.$ Let $\prec\in\lo(\kappa)$ be such that $(A,\prec|A)\in\mathcal{K}$ for every $A\in\age(\mathcal{A}).$ Then we call $\prec$ a normal ordering of $\mathcal{A}$ induced by $\mathcal{K}.$ The space of all normal orderings of $\mathcal{A}$ induced by $\mathcal{K}$ is denoted by $\no_{\mathcal{K}}(\mathcal{A})$ and it is a closed subspace of $\lo(\kappa).$

\begin{example}[\cite{KPT}] \label{natural}
\noindent
\begin{itemize}
\item[(BA)] Let $\mathcal{F}$ be the class of all finite Boolean algebras. We call a linear ordering on a finite Boolean algebra \emph{natural}, if it is induced antilexicographically by a linear ordering on its atoms. Let $\mathcal{K}$ denote the class of all naturally ordered finite Boolean algebras. If $\mathcal{B}$ is a homogeneous Boolean algebra, then $\no(\mathcal{B})\neq\emptyset,$ since both $\age(\mathcal{B})=\mathcal{F}$ and $\mathcal{K}$ are Fra\"iss\'e classes. Moreover, they satisfy the Ramsey property.
\item[(VS)] Let $\mathcal{F}$ be the class of all finite vector spaces over a given finite field $\mathcal{G}$. As in the previous example, we call a linear ordering on a finite vector space of $\mathcal{G}$ natural, if it is an antilexicographical ordering induced by a linear ordering on a basis and a fixed linear ordering of the field $\mathcal{G}$ (see \cite{T}). Let $\mathcal{K}$ be the class of all naturally ordered finite vector spaces. If $\mathcal{V}$ is an infinite vector space over $\mathcal{G},$ then $\no(\mathcal{V})\neq\emptyset,$ since $\mathcal{V}$ is $\omega$-homogeneous and both $\age(\mathcal{V})=\mathcal{F}$ and $\mathcal{K}$ are Fra\"iss\'e classes, the letter shown by Thomas in \cite{T}. Moreover, they satisfy the Ramsey property (see \cite{KPT},p.$144$).
\end{itemize}
\end{example}

Every subgroup $G$ of $S_{\kappa}$ has a natural action on $\lo(\kappa)$ given by $a (g<) b$ if and only if $g^{-1}a < g^{-1} b$ and in the same way if $\mathcal{A}$ is a structure, $\mathcal{K}$ as above and $H$ a subgroup of $\aut(\mathcal{A}),$ then $H$ has a natural action on $\no_{\mathcal{K}}(\mathcal{A}).$

\section{Computations of universal minimal flows}
Let $\mathcal{A}$ be an $\omega$-homogeneous structure with finitely-generated substructures finite and let $G$ be a dense subgroup of the group of automorphisms of $\mathcal{A}.$ Let $A$ be a finite substructure of $\mathcal{A}.$ Then we can identify the right coset space $G_{(A)}/G$ with copies of $A$ in $\mathcal{A}$ via $G_{(A)}g \longleftrightarrow g^{-1}[A]\in \binom{A}{A}.$ Similarly, we can identify the right cosets of $G_A$ in the  right cosets of $G_{(A)}$ with automorphisms of respective copies of $A.$

Now assume that $\mathcal{K}$ is a Fra\"\i ss\'e order class whose reduct is $\age(\mathcal{A}).$ Following \cite{KPT}; Definition 5.5., we say that $\mathcal{K}$ is \emph{order forgetful} whenever 
\begin{equation}\label{eq}
(A,<),(B,\prec)\in \mathcal{K} \mbox{\ and\ } A\cong B \mbox{\ imply \ } (A,<)\cong (B,\prec).
\end{equation}

It is shown in Proposition 5.6 of \cite{KPT} that such an order forgetful expansion $\mathcal{K}$ of $\age(\mathcal{A})$ has the Ramsey property if and only if $\age(\mathcal{A})$ does.

In \cite{KPT}, the univeral minimal flows are computed for groups of automorphisms of countable $\omega$-homogeneous structures $\mathcal{A}$ whose $\age$ has an order expansion $\mathcal{K}$ satisfying the \emph{ordering property}, i.e. for every $A\in\age(\mathcal{A}),$ there is $B\in\age(\mathcal{A})$ such that whenever $\prec$ is a linear ordering on $A,$ $\prec'$ is a linear ordering on $B$ and $(A,\prec),(B,\prec')\in\mathcal{K},$ then $(A,\prec)\leq (B,\prec').$ Since the order forgetful expansions of $\age(\mathcal{A})$ trivially have the ordering property, the following result generalizes Theorem 7.5(ii) to uncountable structures in this special case.

\begin{theorem}\label{T}
Let $\mathcal{A}$ be an $\omega$-homogeneous structure. Suppose that finitely-generated substructures of $\mathcal{A}$ are finite and that they satisfy the Ramsey property. Suppose that $\mathcal{K}$ is a Fra\"\i ss\'e order class that is an order forgetful expansion of $\age(\mathcal{A}).$ 
Then $\no_{\mathcal{K}}(\mathcal{A})$ induced by $\mathcal{K}$ is the universal minimal flow for every dense subgroup of $\aut(\mathcal{A}).$
\end{theorem}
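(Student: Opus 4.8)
The plan is to use the characterization from the previous section: by the Theorem on maximal syndetic subalgebras, the Stone space of any maximal syndetic subalgebra of $L$ is $M(G)$, so it suffices to realize $\mathrm{Clop}(\no_{\mathcal{K}}(\mathcal{A}))$ as such a subalgebra for a fixed dense $G\leq\aut(\mathcal{A})$. Throughout I take $\mathcal{N}=\{G_A : A\leq\mathcal{A}\ \text{finite}\}$, so that $L$ is generated by the left cosets $G_A g$; since $G_A g=G_A g'$ exactly when $g^{-1}$ and $g'^{-1}$ agree on $A$, an element of $L$ is the same data as a two-colouring of the labelled copies (embeddings into $\mathcal{A}$) of some finite substructure. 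Because every dense subgroup of $\aut(\mathcal{A})$ induces the same $\mathcal{N}$, the same $L$, and — by density together with $\omega$-homogeneity — realizes the same finite approximations, it is enough to argue for one such $G$.

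First I would prove that $\no_{\mathcal{K}}(\mathcal{A})$ is minimal. Fix a base ordering $\prec_0\in\no_{\mathcal{K}}(\mathcal{A})$, an arbitrary $\prec'\in\no_{\mathcal{K}}(\mathcal{A})$, and a finite $A\leq\mathcal{A}$. Both $(A,\prec_0|A)$ and $(A,\prec'|A)$ lie in $\mathcal{K}$, so order forgetfulness \eqref{eq} supplies an isomorphism $\sigma$ of ordered structures between them; this is a partial isomorphism of $\mathcal{A}$, which $\omega$-homogeneity extends to $g_0\in\aut(\mathcal{A})$ with $g_0|A=\sigma$, and a short computation gives $(g_0\prec_0)|A=\prec'|A$. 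As this is an open condition on finitely many values, density of $G$ lets me take $g_0\in G$. Hence $\overline{G\prec_0}=\no_{\mathcal{K}}(\mathcal{A})$, and since $\prec'$ was arbitrary the flow is minimal.

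Next I would build the embedding into $L$. Minimality makes $(\no_{\mathcal{K}}(\mathcal{A}),\prec_0)$ an ambit, so the orbit map $g\mapsto g\prec_0$ extends to a morphism of ambits $S(G)\to\no_{\mathcal{K}}(\mathcal{A})$ sending $e$ to $\prec_0$; dually this is a $G$-equivariant embedding $\iota\colon\mathrm{Clop}(\no_{\mathcal{K}}(\mathcal{A}))\to L$ with $\iota(O)=\ret(\prec_0,O)$. Each basic clopen $(A,<)^*\cap\no_{\mathcal{K}}(\mathcal{A})$ pulls back to $\{g\in G:(g\prec_0)|A=<\}$, which is left $G_A$-invariant and so lies in $L$; and by minimality together with part (iii) of the minimality Lemma every $\ret(\prec_0,O)$ is syndetic. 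Thus $\mathcal{C}:=\iota(\mathrm{Clop}(\no_{\mathcal{K}}(\mathcal{A})))$ is a syndetic subalgebra of $L$, and under the coloured-copies picture it is precisely the algebra of colourings that depend only on the order type $\prec_0$ induces on a copy — here one uses order forgetfulness, and the rigidity of finite linear orders, to see that this order type is the only available invariant.

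The hard part is maximality, and this is where the Ramsey property enters. Given a syndetic subalgebra $\mathcal{B}\supseteq\mathcal{C}$, I would show that no $D\in\mathcal{B}\setminus\mathcal{C}$ can exist. Realizing $D$ as a two-colouring $c$ of the labelled copies of a finite $A$ that does not factor through the induced $\prec_0$-order type, there are two copies of the same ordered type receiving different colours. Choosing an ordered $B\in\mathcal{K}$ containing this type and applying $C\to(B)^A_2$, I would realize such a $C$ inside $(\mathcal{A},\prec_0)$ — possible by order forgetfulness and $\omega$-homogeneity — and extract a copy of $B$ on which $c$ is constant across all copies of $A$. Combined with the syndeticity of both colour classes of $D$ and the minimality of the factor $\mathrm{Stone}(\mathcal{B})\to\no_{\mathcal{K}}(\mathcal{A})$, this monochromaticity is incompatible with $c$ genuinely splitting a single order type, forcing $D\in\mathcal{C}$. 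Hence $\mathcal{C}$ is maximal syndetic, and by the Theorem its Stone space $\no_{\mathcal{K}}(\mathcal{A})$ equals $M(G)$. I expect the main obstacle to be reconciling the finite, unordered Ramsey statement with the labelled and ordered colourings and with global syndeticity — that is, making the final incompatibility fully precise.
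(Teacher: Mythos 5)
Your overall architecture is the same as the paper's: prove minimality, realize the clopen algebra of $\no_{\mathcal{K}}(\mathcal{A})$ inside $L$ as the algebra $\mathcal{C}$ of return sets $\ret(\prec_0,O)$, and show $\mathcal{C}$ is a maximal syndetic subalgebra so that the structure theorem for $M(G)$ applies. Your minimality argument (orbit density via order forgetfulness, rigidity of finite linear orders, $\omega$-homogeneity and density of $G$) is correct and is a valid alternative to the paper's verification that each $\ret(<,(A,<')^*)$ is syndetic, and the construction of the embedding $\iota$ is fine.

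The gap is in maximality, exactly where you flag it, and it is not a routine reconciliation of conventions: it is the only place the Ramsey property does any work, and your sketch points it at the wrong structure. You propose to apply $C\to(B)^A_2$ with $B$ merely an ordered structure ``containing this type''; a monochromatic copy of such a $B$ contradicts nothing. The paper's argument runs as follows. Given $H=G_AK$ such that $\mathcal{C}\cup\{H\}$ generates a syndetic algebra, first pass to the \emph{unlabelled} colouring by forming $H'=S^AH=G_{(A)}K$, where $S^A$ is a finite set of representatives of the right cosets of $G_A$ in $G_{(A)}$; then $H'$ lies in the algebra generated by left translates of $H$ and corresponds to a $2$-colouring $c$ of $\binom{\mathcal{A}}{A}$. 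If $H'\neq G$, then $H'$ and $G\setminus H'$ are both nonempty members of a syndetic algebra, so there are $g_1,\dots,g_n$ with $\bigcup_i g_iH'=\bigcup_i g_i(G\setminus H')=G$. The Ramsey property must now be applied with target the substructure $C$ generated by $\bigcup_{i=1}^n g_i[A]$: a $c$-monochromatic copy $C'$ of $C$, moved back onto $C$ by some $f\in G$ (using density and $\omega$-homogeneity), makes every $f^{-1}g_i[A]$ the same colour, say $H'$, i.e.\ $f\in g_iH'$ for all $i$, contradicting $\bigcup_i g_i(G\setminus H')=G$. So $H$ meets every right coset of $G_{(A)}$. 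Maximality then follows from the partition $G=\bigcup_{\sigma\in S^A}\sigma\ret(\prec_0,(A,\prec_0|A)^*)$, each piece of which meets each right coset of $G_{(A)}$ in exactly one right coset of $G_A$ (order forgetfulness plus rigidity of finite linear orders): the previous step, applied to $H\cap\sigma\ret(\prec_0,(A,\prec_0|A)^*)$, forces $H$ to be a union of such pieces, all of which already lie in $\mathcal{C}$. This decomposition also supplies the justification missing from your claim that any $D\in\mathcal{B}\setminus\mathcal{C}$ must split a single ordered type.
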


\begin{proof}
Let $G$ be a dense subgroup of $\aut(\mathcal{A}).$ To prove minimality of $\no_{\mathcal{K}}(\mathcal{A}),$ we need to verify that $\ret(<,(A,<')^*)=\{g\in G: g<\in(A,<')^*\}$ is syndetic for every $<\in\no_{\mathcal{K}}(\mathcal{A})$ and $(A,<')\in\mathcal{K}.$ Let $S^A$ be a set of representants for right cosets of $G_A$ in $G_{(A)}.$ As $\mathcal{K}$ is order forgetful, $\ret(<,(A,<')^*)$ intersects every right coset of $G_{(A)},$ so $S^A\ret(<,(A,<')^*)=G.$ Since $A$ is finite, also $S^A$ is finite and hence $\ret(<,(A,<')^*)$ is syndetic.

To prove universality, we need to show that given $<\in\no_{\mathcal{K}}(\mathcal{A}),$ the syndetic algebra $\mathcal{B}$ generated by $\{\ret(<,(A,<')^*):(A,<')\in\mathcal{K}\}$ is a maximal syndetic subalgebra of the algebra $L=\{G_AK:K\subset G, A\in\age(\mathcal{A})\}.$  

In order to do that, we show that if left translates of a set $H=G_AK\in L$ generate a syndetic subalgebra of $L,$ then $H$ intersects every right coset of $G_{(A)}:$
Suppose that $\{g:H\cap G_{(A)}g=\emptyset\}$ is nonempty. Then $H'=S^AH=G_{(A)}K$ is in the algebra generated by left translates of $H$ and $G\setminus H'\neq\emptyset.$ Consider the induced colouring $c: \binom{\mathcal{A}}{ A} \to \{H', G\setminus H'\}$ given by $c(A')=H'$ if and only if there is an $h\in H$ such that $h^{-1}[A]=A'.$ Suppose now that there are $g_1,g_2,\ldots,g_n\in G$ witnessing that both $H'$ and $G\setminus H'$ are syndetic. Let $C$ be the substructure of $\mathcal{A}$ generated by $\bigcup_{i=1}^n g_i[A].$ Since $\age(\mathcal{A})$ satisfies the Ramsey property, there is a copy $C'$ of $C$ in $\mathcal{A}$ which is monochromatic, say in the colour $H'.$ Since $G$ is dense in $\aut(\mathcal{A})$ and $\mathcal{A}$ is $\omega$-homogeneous, there is an $f\in G$ mapping $C'$ to $C.$ It means that $c(f^{-1}g_iA)=H'$ for all $i,$ which shows that there is no $h\in G\setminus H'$ and no $i$ such that $f=g_i\circ h,$ a contradiction.

Now let $H=G_AK\in L$ be syndetic. We have that $A_{\sigma}:=\sigma\ret(<,(A, <)^*)\in \mathcal{B}$ for every $\sigma\in S^A$ and $G=\bigcup_{\sigma\in S^A} A_{\sigma},$ therefore $\bigcup_{\sigma\in S^A}(H\cap A_{\sigma})=H.$ For every $\sigma\in S^A$ either $A_{\sigma}\cap H=\emptyset,$ or $A_{\sigma}\cap H=A_{\sigma},$ since $A_{\sigma}$ takes exactly one right coset of $G_A$ in every right coset of $G_{(A)}$ and then $H\in\mathcal{B},$ or $\mathcal{B}\cup \{H\}$ does not generate a syndetic algebra . It follows that $\mathcal{B}$ is maximal.   
 
\end{proof}

\subsection*{Group of all bijections}
Since both finite sets and finite linear orders are Fra\"iss\'e classes satisfying the  Ramsey property and finite sets are a reduct of finite linear orders satifying (\ref{eq}), we can generalize the result of Glasner and Weiss (\cite{GW2}) about $S_{\omega}$ to $S_{\kappa}$ for arbitrary infinite $\kappa.$

\begin{theorem}
The universal minimal flow of $S_{\kappa}$ is $\lo(\kappa).$
\end{theorem}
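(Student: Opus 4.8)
The plan is to derive this as an immediate corollary of Theorem \ref{T}. The key observation is that we already have all the hypotheses in place once we fix the right structure. Take $\mathcal{A}$ to be a bare set of cardinality $\kappa$ with no relations, so that $\aut(\mathcal{A})=S_{\kappa}$ and $\age(\mathcal{A})$ is the class of finite sets. This structure is trivially $\omega$-homogeneous, its finitely-generated substructures are finite, and finite sets satisfy the Ramsey property (this is the classical Ramsey theorem, recorded as Example \ref{E}(a) and its Ramsey companion). So three of the four hypotheses of Theorem \ref{T} are satisfied by inspection.

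Second, I would exhibit the appropriate order class. Let $\mathcal{K}$ be the class of all finite linear orderings, viewed as an expansion of the class of finite sets by the single binary symbol $<$. By Example \ref{E}(e) this is a Fra\"iss\'e class, and by the remark accompanying it, it satisfies the Ramsey property. The crucial point is that $\mathcal{K}$ is \emph{order forgetful} in the sense of (\ref{eq}): any two linear orderings on finite sets of the same cardinality are isomorphic as ordered structures, so $(A,<)\cong(B,\prec)$ whenever $A\cong B$ (i.e.\ $|A|=|B|$). Hence $\mathcal{K}$ is precisely an order forgetful Fra\"iss\'e order expansion of $\age(\mathcal{A})$, which is the final hypothesis of Theorem \ref{T}.

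Third, I would identify the space of normal orderings with $\lo(\kappa)$ itself. Since $\mathcal{K}$ consists of \emph{all} finite linear orderings, the condition defining $\no_{\mathcal{K}}(\mathcal{A})$ — that $(A,\prec|A)\in\mathcal{K}$ for every finite $A$ — is automatically met by every $\prec\in\lo(\kappa)$, because the restriction of any linear ordering to a finite set is a finite linear ordering. Therefore $\no_{\mathcal{K}}(\mathcal{A})=\lo(\kappa)$ as $G$-spaces, the actions agreeing by the definition of the $G$-action on $\lo(\kappa)$ given just before this section. Applying Theorem \ref{T} to the dense subgroup $G=S_{\kappa}=\aut(\mathcal{A})$ of itself then yields that $\lo(\kappa)$ is the universal minimal flow of $S_{\kappa}$.

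I do not anticipate a genuine obstacle here, since the work has been done in Theorem \ref{T}; the proof is purely a matter of checking that the trivial structure and the class of finite linear orderings fit the template. The one point deserving a sentence of care is the verification of order forgetfulness, since that is the hypothesis doing the real work and is exactly what fails for richer structures; but for bare sets it is immediate from the fact that a finite linear order is determined up to isomorphism by its cardinality. A secondary point worth stating explicitly is the identification $\no_{\mathcal{K}}(\mathcal{A})=\lo(\kappa)$, ensuring the closed subspace of $\lo(\kappa)$ cut out by $\mathcal{K}$ is the whole space.
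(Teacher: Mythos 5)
Your proposal is correct and is exactly the argument the paper intends: the theorem is stated as an application of Theorem \ref{T} with $\mathcal{A}$ a bare set of cardinality $\kappa$, $\age(\mathcal{A})$ the class of finite sets, and $\mathcal{K}$ the class of finite linear orderings, using the classical Ramsey theorem and the order forgetfulness condition (\ref{eq}). You simply spell out the verifications (including the identification $\no_{\mathcal{K}}(\mathcal{A})=\lo(\kappa)$) that the paper leaves implicit.
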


\subsection*{Homogeneous Boolean algebras}

$\omega$-homogeneous Boolean algebras are usually called just homogeneous. The Age of a homogeneous Boolean algebra is the class of all finite Boolean algebras. Recall (see \cite{KPT}; p.$145$) that a linear ordering $<$ on a finite Boolean algebra is natural if it is an antilexicographical order induced by an ordering of the atoms. Since both the class of finite Boolean algebras and the class of naturally ordered finite Boolean algebras are Fra\"iss\'e classes with the Ramsey property and they satisfy the assumptions of the theorem, we get the following result, which generalizes Theorem $8.2 (iii)$ in \cite{KPT} to uncountable homogeneous Boolean algebras.

\begin{theorem}
Let $\mathcal{B}$ be a homogeneous Boolean algebra and $\mathcal{K}$ the class of naturally ordered finite Boolean algebras. Then the universal minimal flow of $\aut(\mathcal{B})$ is the space $\no_{\mathcal{K}}(\mathcal{B})$ of all linear orderings on $\mathcal{B}$ that are natural when restricted to a finite subalgebra.
\end{theorem}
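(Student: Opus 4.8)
The plan is to obtain the conclusion as a direct instance of Theorem \ref{T}, so the task reduces to checking that $\mathcal{B}$ and $\mathcal{K}$ satisfy its hypotheses. Three of these are already in hand. First, $\mathcal{B}$ is $\omega$-homogeneous by assumption and its finitely-generated substructures are exactly the finite subalgebras, so $\age(\mathcal{B})$ is the class of all finite Boolean algebras, whose members are finite. Second, this class has the Ramsey property, namely the dual Ramsey theorem of Graham and Rothschild \cite{GR} recorded among the examples above. Third, $\mathcal{K}$, the class of naturally ordered finite Boolean algebras, is a Fra\"iss\'e order class expanding $\age(\mathcal{B})$; this was noted in Example \ref{natural}, item (BA), and the reduct condition $\age(\mathcal{B})=\{K|L:K\in\mathcal{K}\}$ holds because every finite Boolean algebra carries at least one natural ordering (choose any ordering of its atoms) while the reduct of a naturally ordered algebra is again a finite Boolean algebra.

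The only genuinely new point is that $\mathcal{K}$ is \emph{order forgetful} in the sense of (\ref{eq}). So I would suppose $(A,<),(B,\prec)\in\mathcal{K}$ with $A\cong B$ as Boolean algebras and produce an isomorphism of the ordered structures. Since $A\cong B$ they have the same number of atoms; let $a_1<\cdots<a_n$ enumerate the atoms of $A$ in the order inducing $<$ and $b_1\prec\cdots\prec b_n$ those of $B$ inducing $\prec$. A finite Boolean algebra is determined by its atoms, so $a_i\mapsto b_i$ extends uniquely to a Boolean-algebra isomorphism $\phi:A\to B$. Each element is the join of the atoms below it, and under the antilexicographic convention the relative position of two elements depends only on the induced ordering of the atoms; since $\phi$ carries the atom ordering of $A$ onto that of $B$, it carries $<$ to $\prec$. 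Hence $\phi:(A,<)\to(B,\prec)$ is an isomorphism of ordered Boolean algebras, which is exactly order forgetfulness. Equivalently, $\aut$ of a finite Boolean algebra induces the full symmetric group on its atoms and therefore acts transitively on its natural orderings.

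With all hypotheses verified, Theorem \ref{T} applies and yields that $\no_{\mathcal{K}}(\mathcal{B})$ is the universal minimal flow of every dense subgroup of $\aut(\mathcal{B})$, in particular of $\aut(\mathcal{B})$ itself. Unwinding the definition of $\no_{\mathcal{K}}(\mathcal{B})$, its points are precisely the linear orderings of $\mathcal{B}$ whose restriction to each finite subalgebra is natural, which is the description in the statement. I expect the order-forgetfulness verification to be the only step requiring an argument rather than a citation, and within it the one substantive fact is the transitivity of the atom-permutation action; the remaining hypotheses are inherited verbatim from the cited examples.
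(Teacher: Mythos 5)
Your proposal is correct and follows essentially the same route as the paper: the theorem is obtained as a direct instance of Theorem \ref{T} after checking that the class of finite Boolean algebras and its expansion by natural orderings satisfy the hypotheses. The only difference is that you spell out the order-forgetfulness verification (via the atom-permutation argument), which the paper leaves implicit with a reference to \cite{KPT}; your verification is accurate.
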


Homogeneous Boolean algbebras are in Stone duality with $h$-homogeneous zero-dimensional compact Hausdorff spaces, so this is just a dual version of a result by Glasner and Gutman. Recall that a topological space $X$ is called \emph{$h$-homogeneous}, if all non-empty clopen subsets of $X$ are homeomorphic.

\begin{theorem}[\cite{GG}]
Let $X$ be an $h$-homogeneous zero-dimensional compact Hausdorff topological space. Let $G=\homeo(X)$ equipped with the compact-open topology, then $M(G)=\Phi(X),$ the space of maximal chains on $X.$
\end{theorem}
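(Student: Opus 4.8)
The plan is to reduce the statement to the preceding theorem on homogeneous Boolean algebras by Stone duality, and then to translate the space of natural orderings into the space of maximal chains. Let $\mathcal{B}$ be the Boolean algebra of clopen subsets of $X$. Since $X$ is zero-dimensional compact Hausdorff, Stone duality gives $X\cong \ult(\mathcal{B}),$ the $h$-homogeneity of $X$ is exactly the assertion that $\mathcal{B}$ is a homogeneous Boolean algebra, and every homeomorphism of $X$ is induced by a unique automorphism of $\mathcal{B}$ and conversely, so that $G=\homeo(X)$ is isomorphic as an abstract group to $\aut(\mathcal{B}).$ One first checks that this isomorphism is a homeomorphism: a subbasis for the compact-open topology on $\homeo(X)$ is given by the sets $\{f:f[U]=U'\}$ with $U,U'$ clopen, and since the clopen sets form a basis of the compact space $X$ this topology coincides with pointwise convergence on the discrete set $\mathcal{B},$ which is the topology of $\aut(\mathcal{B}).$ Assuming $X$ infinite (the finite case being a direct computation in which both sides equal $G$ acting on itself), $\mathcal{B}$ is atomless, its age is the class of all finite Boolean algebras, and the preceding theorem applies with $\mathcal{K}$ the class of naturally ordered finite Boolean algebras, yielding $M(G)=\no_{\mathcal{K}}(\mathcal{B}).$ Everything thus reduces to producing a $G$-equivariant homeomorphism between $\no_{\mathcal{K}}(\mathcal{B})$ and the space $\Phi(X)$ of maximal chains.

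Next I would build the dictionary between maximal chains and natural orderings. A maximal chain $\mathcal{C}\subseteq\exp(X)$ is linearly ordered by inclusion and, by compactness and maximality, is closed in $\exp(X)$ and closed under arbitrary intersections; it induces a linear order $\le_{\mathcal{C}}$ on the points of $X$ by declaring $x\le_{\mathcal{C}}y$ precisely when every member of $\mathcal{C}$ containing $y$ also contains $x,$ maximality guaranteeing that $\le_{\mathcal{C}}$ is total. Given a finite clopen partition of $X,$ that is, the atoms of a finite subalgebra $\mathcal{B}_0\le\mathcal{B},$ I order its pieces by the position at which $\mathcal{C}$ finishes absorbing each piece, equivalently by the $\le_{\mathcal{C}}$-supremum of each piece. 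The key finite computation, already visible in the symmetric-group case, is that this ``largest differing piece'' rule is exactly the antilexicographic rule, so the induced order on $\mathcal{B}_0$ lies in $\mathcal{K}.$ Letting $\mathcal{B}_0$ range over all finite subalgebras yields an ordering $\Theta(\mathcal{C})\in\no_{\mathcal{K}}(\mathcal{B}),$ and one checks directly that $\Theta$ is continuous and commutes with the two actions, the action on chains being $g\mathcal{C}=\{g[F]:F\in\mathcal{C}\}$ and the action on orderings being the natural one introduced above.

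To see that $\Theta$ is a bijection I would construct its inverse. Starting from a natural ordering $\prec,$ the antilexicographic orders on the finite subalgebras cohere into a single linear order $<_{\prec}$ on the points of $X$ (a point being a thread through the inverse system of finite quotient spaces), and the closed initial segments of $<_{\prec}$ form a chain of closed subsets whose closure in $\exp(X)$ is the sought maximal chain. One then verifies that the two assignments are mutually inverse, so $\Theta$ is a continuous bijection; as $\no_{\mathcal{K}}(\mathcal{B})$ and $\Phi(X)$ are both compact Hausdorff, $\Theta$ is a homeomorphism of $G$-flows, and together with $M(G)=\no_{\mathcal{K}}(\mathcal{B})$ this gives $M(G)=\Phi(X).$

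The main obstacle is precisely this dictionary, and within it the maximality. It is tempting but wrong to glue the finite chains of clopen ``initial-segment'' sets coming from different subalgebras: already simple four-point examples show that initial segments attached to $\mathcal{B}_0$ and to a refinement $\mathcal{B}_1$ need not be nested, so the naive union fails to be a chain. The correct route is to pass first to a linear order on the points of $X$ and only then take closed initial segments. The delicate points are that a maximal chain really does induce a total rather than merely partial order on points, that the antilexicographic orders on the finite subalgebras assemble coherently into one point order, and above all that the closed initial segments of that order form a chain admitting no proper closed refinement. This final maximality statement is where zero-dimensionality and $h$-homogeneity of $X$ enter, ensuring that clopen sets separate points finely enough that the chain has no insertable gaps.
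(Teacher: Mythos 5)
Your proposal follows essentially the same route as the paper, which does not reprove the Glasner--Gutman theorem from scratch but presents it as the Stone dual of its preceding theorem on homogeneous Boolean algebras, deferring the explicit $G$-equivariant isomorphism between $\no_{\mathcal{K}}(\mathcal{B})$ and $\Phi(X)$ to Theorem 8.3 of \cite{KPT}. Your expansion of that step --- passing from a maximal chain to a total (pre)order on points and back via closed initial segments, rather than trying to glue clopen initial segments from different finite subalgebras --- is the standard construction and correctly flags maximality of the resulting chain as the delicate point.
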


The space of maximal chains was introduced by Uspenskij in \cite{U}: Let $X$ be a compact space and denote by $\exp X$ the space of closed subsets of $X$ equipped with the Vietoris topology. Then the space $\Phi(X)$ of all maximal chains of closed subsets of $X$ is a closed subspace of $\exp \exp X.$ The natural action of $\homeo(X)$ on $X$ induces an action on $\exp X$ and $\Phi(X),$ which is the action considered in the theorem above. There is of course an explicit isomorphism between these two universal minimal flows  (see Theorem 8.3 in \cite{KPT}). 

\medskip

Following a paper by van Douwen \cite{vD}, if $\kappa$ is a cardinal number, we denote  by $\mathcal{P}(\kappa)/[\kappa]^{<\kappa}$ the quotient algebra of the Boolean algebra of all subsets of $\kappa$ by the ideal of sets of cardinality less than $\kappa.$ It is easy to see that $\mathcal{P}(\kappa)/[\kappa]^{<\kappa}$ is homogeneous for every cardinal $\kappa.$ 

Now we introduce two subgroups of $\mathcal{P}(\kappa)/[\kappa]^{<\kappa}:$ 
Denote by $T_{\kappa}$ the set of all bijections between subsets $A,B\subset\kappa$ with $\card(\kappa\setminus A), \card(\kappa\setminus B)<\kappa.$ With the operation of composition, $T_{\kappa}$ is a monoid, but not a group. We can however assign to each $f\in T_{\kappa}$ an automorphism $f^*$ of $\mathcal{P}(\kappa)/[\kappa]^{<\kappa},$ $f^*([X])=[f[X]],$ mapping $T_{\kappa}$ onto a subgroup $T_{\kappa}^*=\{f^*:f\in T_{\kappa}\}$ of $\aut(\mathcal{P}(\kappa)/[\kappa]^{<\kappa}).$ Since the automorphisms in $T_{\kappa}^*$ are induced by a pointwise bijection between subsets of $\kappa,$ we call them \emph{trivial}. Inside of $T_{\kappa}^*$ we have a normal subroup of those automorphisms induced by a true permutation of $\kappa,$ let us denote it by  $S_{\kappa}^*.$ Shelah \cite{Sh} (see also \cite{SS}) proved that consistently every automorphism of $\mathcal{P}(\omega)/{\rm fin}$ is trivial. This has been extended to $\mathcal{P}(\kappa)/{\rm fin}$ for all cardinals $\kappa$ in \cite{V}. Of course, consistently the two groups are different. 

The next theorem shows that $T_{\omega}^*$ and $S_{\omega}^*$ do not coincide, hence $T_{\omega}^*$ (and thus consistently also $\aut(\mathcal{P}(\omega)/{\rm fin})$) is not simple.

\begin{theorem}[\cite{vD}]
There is a homomorphism $h^*$ from $T_{\omega}^*$ onto $\mathbb{Z}$ with kernel $S_{\omega}^*.$ (In particular, $T_{\omega}^*\neq S_{\omega}^*.$)
\end{theorem}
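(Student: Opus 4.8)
The plan is to construct $h^*$ explicitly by measuring how much a trivial automorphism ``shifts mass'' between a cofinite set and its image. Given $f^*\in T_\omega^*$, the underlying $f$ is a bijection between two cofinite subsets $A,B\subseteq\omega$; that is, $f$ fails to be a genuine permutation of $\omega$ precisely because $\omega\setminus A$ and $\omega\setminus B$ may have different finite cardinalities. The natural candidate is therefore
\begin{equation*}
h^*(f^*)=\card(\omega\setminus A)-\card(\omega\setminus B)=\card(\omega\setminus\mathrm{dom}\,f)-\card(\omega\setminus\mathrm{ran}\,f).
\end{equation*}
First I would check that $h^*$ is well-defined on $T_\omega^*$ rather than merely on $T_\omega$: two maps $f,g\in T_\omega$ induce the same $f^*=g^*$ exactly when they agree off a finite set, and enlarging or shrinking the domain of $f$ by finitely many points changes $\card(\omega\setminus A)$ and $\card(\omega\setminus B)$ by the same integer, leaving the difference invariant. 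This is the routine but essential sanity check that the integer only depends on the class in $\mathcal{P}(\omega)/\mathrm{fin}$.

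Next I would verify that $h^*$ is a homomorphism. For $f,g\in T_\omega$ the composition $f\circ g$ is defined on a cofinite set, and the inclusion–exclusion bookkeeping for the complements of domain and range of a composite bijection gives
\begin{equation*}
\card(\omega\setminus\mathrm{dom}(f\circ g))-\card(\omega\setminus\mathrm{ran}(f\circ g))
=\bigl(\card(\omega\setminus\mathrm{dom}\,g)-\card(\omega\setminus\mathrm{ran}\,g)\bigr)+\bigl(\card(\omega\setminus\mathrm{dom}\,f)-\card(\omega\setminus\mathrm{ran}\,f)\bigr),
\end{equation*}
which is additivity $h^*(f^*g^*)=h^*(f^*)+h^*(g^*)$. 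Surjectivity onto $\mathbb{Z}$ is then immediate: a shift-like map, say $f$ sending $n\mapsto n+1$ on all of $\omega$ (so $\mathrm{dom}\,f=\omega$, $\mathrm{ran}\,f=\omega\setminus\{0\}$), has $h^*(f^*)=0-1=-1$, and its powers and inverse realize every integer.

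Finally I would identify the kernel with $S_\omega^*$. An element $f^*$ lies in the kernel iff $\card(\omega\setminus A)=\card(\omega\setminus B)$; in that case the two finite complements admit a bijection, and patching $f$ together with any such bijection on the complements extends $f$ to a genuine permutation $\tilde f$ of $\omega$ with $\tilde f^*=f^*$, so $f^*\in S_\omega^*$. Conversely if $f^*\in S_\omega^*$ it is induced by an honest permutation, whose domain and range are all of $\omega$, giving $h^*(f^*)=0$. The ``in particular'' clause follows since $h^*$ is onto $\mathbb{Z}$ and hence nontrivial. I expect the only genuinely delicate point to be the homomorphism computation: one must be careful that when composing $f\circ g$ the effective domain is $g^{-1}[\mathrm{ran}\,g\cap\mathrm{dom}\,f]$, so the cardinalities of the relevant complements must be tracked through this intersection; once the finite complements are accounted for correctly the telescoping identity drops out, but the clean way to avoid error is probably to reduce to the two generating moves (a true permutation, which has invariant $0$, and the elementary one-point shift) and argue multiplicatively.
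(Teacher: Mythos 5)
The paper gives no proof of this statement---it is quoted from van Douwen's paper---so there is nothing internal to compare against; your argument is correct and is in fact the standard one: your $h^*$ is precisely van Douwen's index homomorphism $f^*\mapsto\card(\omega\setminus\mathrm{dom}\,f)-\card(\omega\setminus\mathrm{ran}\,f)$. The one ingredient you use without justification is that $f^*=g^*$ forces $f$ and $g$ to agree on a cofinite set (this is what makes well-definedness reduce to your ``enlarge/shrink the domain by finitely many points'' observation); it is true, but it needs a short diagonal argument: from an infinite set of disagreement one extracts, using injectivity of $f$ and $g$, an infinite $X$ with $f[X]\cap g[X]=\emptyset$, contradicting $f^*=g^*$. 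Your composition identity does check out once one writes $\mathrm{dom}(f\circ g)=g^{-1}[\mathrm{dom}\,f\cap\mathrm{ran}\,g]$, exactly as you flag: the two cross terms $\card(\mathrm{ran}\,g\setminus\mathrm{dom}\,f)$ and $\card(\mathrm{dom}\,f\setminus\mathrm{ran}\,g)$ differ by $\card(\omega\setminus\mathrm{dom}\,f)-\card(\omega\setminus\mathrm{ran}\,g)$, which is what the telescoping requires.
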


Van Douwen also identified all normal subgroups of $T_{\omega}^*.$ 

\begin{theorem}[\cite{vD}]
A subgroup $G$ of $T_{\omega}^*$ is normal if and only if $\card{(G)}=1$ or $G\in\{(h^*)^{-1}k\mathbb{Z}:k\in\mathbb{N}\}.$
\end{theorem}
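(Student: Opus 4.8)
The plan is to understand the structure of $T_\omega^*$ well enough to identify its normal subgroups, using the surjective homomorphism $h^*\colon T_\omega^*\to\mathbb Z$ with kernel $S_\omega^*$ from the previous theorem. First I would recall that $h^*$ measures a kind of ``signed deficiency'': for a trivial automorphism induced by a bijection $f\colon A\to B$ with $\kappa\setminus A,\kappa\setminus B$ finite, $h^*(f^*)=\card(\omega\setminus B)-\card(\omega\setminus A)$, so that genuine permutations (where $A=B=\omega$) land in the kernel $S_\omega^*$. The subgroups of the form $(h^*)^{-1}[k\mathbb Z]$ are automatically normal, being preimages of the (normal, since $\mathbb Z$ is abelian) subgroups $k\mathbb Z$ of $\mathbb Z$ under a homomorphism; the trivial subgroup $\{e\}$ is of course normal. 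So the content of the theorem is the converse: that these exhaust all normal subgroups.

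For the converse, let $G\trianglelefteq T_\omega^*$ with $\card(G)\ge 2$. The key step is to analyze $G\cap S_\omega^*$, a normal subgroup of $S_\omega^*$. Here I would invoke the classical Schreier--Ulam--Baer description of the normal subgroups of the full symmetric group $S_\omega$ (equivalently $S_\omega^*$): they are $\{e\}$, the group of finitely-supported permutations, its subgroup of even finitely-supported permutations, and all of $S_\omega^*$. The plan is to show that as soon as $G$ contains a single nontrivial element, normality forces $G\cap S_\omega^*$ to be large. Concretely, if $g\in G$ moves infinitely many points or has nontrivial deficiency, then conjugating $g$ by suitable elements of $T_\omega^*$ and multiplying produces, inside $G$, commutators and products that generate a big chunk of $S_\omega^*$; since the only normal subgroups of $S_\omega^*$ sitting inside a nontrivial normal $G$ turn out to be forced up to all of $S_\omega^*$, one deduces $S_\omega^*\subseteq G$, i.e.\ $\ker h^*\subseteq G$.

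Once $S_\omega^*=\ker h^*\subseteq G$, the correspondence theorem takes over: normal subgroups of $T_\omega^*$ containing $\ker h^*$ correspond bijectively to subgroups of the image $\mathbb Z$, and every subgroup of $\mathbb Z$ is either trivial or $k\mathbb Z$ for a unique $k\in\mathbb N$. Pulling back, $G=(h^*)^{-1}[k\mathbb Z]$ for the appropriate $k$ (with $k=0$ giving $G=S_\omega^*=(h^*)^{-1}[\{0\}]$, which should be read as the $k\mathbb Z$ case for $k=0$, or folded into the stated family once one checks the intended indexing). This reduces the whole problem to the single nontrivial implication ``$\card(G)\ge 2\Rightarrow \ker h^*\subseteq G$''.

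I expect the main obstacle to be precisely that implication, and within it the delicate point that $T_\omega^*$ is a proper extension of $S_\omega^*$ by $\mathbb Z$: an element of $G$ realizing a nonzero value of $h^*$ is induced by a bijection between co-finite sets of \emph{different} finite deficiency, and it is not a permutation, so I cannot directly apply the symmetric-group normal-subgroup theorem to it. The trick will be to manufacture, from such a non-permutation $g$ together with its conjugates, an element of $G\cap S_\omega^*$ that is a genuine permutation moving infinitely many points (for instance by composing $g$ with a conjugate $tgt^{-1}$ chosen so the deficiencies cancel, yielding a permutation in $G$ with infinite support), and only then to run the Schreier--Ulam--Baer machinery to conclude $S_\omega^*\subseteq G$. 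Verifying that this cancellation can always be arranged, and that the resulting permutation is nontrivial on an infinite set, is the technical heart of the argument; the rest is the formal correspondence-theorem bookkeeping.
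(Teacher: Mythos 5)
This statement is quoted from van Douwen \cite{vD} and used as a black box; the paper contains no proof of it, so there is nothing internal to compare your attempt against. Judged on its own, your outline has the right skeleton — the correspondence theorem reduces everything to showing that a normal subgroup $G$ with $\card(G)\geq 2$ must contain $\ker h^*=S_{\omega}^*$ — but it contains one genuine error and leaves the decisive step unproved. The error: $S_{\omega}^*$ is \emph{not} ``equivalently'' $S_{\omega}$ for the purposes of Schreier--Ulam--Baer. A finitely supported permutation induces the identity on $\mathcal{P}(\omega)/{\rm fin}$, so $S_{\omega}^*\cong S_{\omega}/S_{\rm fin}$ where $S_{\rm fin}$ is the finitary symmetric group; by Schreier--Ulam--Baer the only normal subgroups of $S_{\omega}$ containing $S_{\rm fin}$ are $S_{\rm fin}$ and $S_{\omega}$, hence $S_{\omega}^*$ is \emph{simple}, and the intermediate normal subgroups you list (finitary and even finitary permutations) collapse to the identity inside $S_{\omega}^*$. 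This actually works in your favor: once $G\cap S_{\omega}^*\neq\{e\}$, simplicity gives $S_{\omega}^*\subseteq G$ at once, with no need to ``force up'' through a chain of normal subgroups.

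The remaining gap is the step you yourself flag as the technical heart, and it cannot be deferred: you must exhibit a nontrivial element of $G\cap S_{\omega}^*$. Your proposed cancellation is also slightly off as written: $h^*(g\cdot tgt^{-1})=2h^*(g)$, so composing $g$ with a conjugate of $g$ does not kill the deficiency; what lands in $G\cap\ker h^*$ is a commutator $gtg^{-1}t^{-1}$ (equivalently, $g$ times a conjugate of $g^{-1}$). One must then prove that some $t\in T_{\omega}^*$ fails to commute with the given $g\neq e$, i.e.\ that the centralizer of $S_{\omega}^*$ in $T_{\omega}^*$ is trivial: choose $[X]$ with $g^*[X]\neq[X]$ and a permutation $t$ fixing $X$ modulo finite sets while moving $g[X]$ off itself modulo finite sets, so that $gtg^{-1}t^{-1}$ is a non-finitely-supported permutation and hence nontrivial in $S_{\omega}^*$. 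Without that argument the proof is incomplete; with it, together with the corrected simplicity statement and the correspondence-theorem bookkeeping (noting that $k=0$ yields $S_{\omega}^*$ itself), the classification follows.
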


It follows that all non-trivial normal subgroups of $T_{\omega}^*$ are dense in $\aut(\mathcal{P}(\omega)/{\rm fin}).$ So we can apply Theorem \ref{T} to obtain the following corollary.

\begin{corollary}
The universal minimal flow for all normal subgroups of $T_{\omega}^*$  and of $\aut(\mathcal{P}(\omega)/{\rm fin})$ is  $\no(\mathcal{P}(\omega)/{\rm fin}).$
\end{corollary}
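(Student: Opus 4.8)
The plan is to deduce the corollary directly from Theorem \ref{T} together with the structural results of van Douwen quoted just above. The only genuine content is verifying that each relevant group satisfies the hypotheses of Theorem \ref{T}, after which the conclusion is immediate. Specifically, Theorem \ref{T} says that if $\mathcal{A}$ is an $\omega$-homogeneous structure whose finitely-generated substructures are finite and satisfy the Ramsey property, and $\mathcal{K}$ is a Fra\"iss\'e order class that is an order-forgetful expansion of $\age(\mathcal{A})$, then $\no_{\mathcal{K}}(\mathcal{A})$ is the universal minimal flow of \emph{every} dense subgroup of $\aut(\mathcal{A})$. So I would first take $\mathcal{A} = \mathcal{P}(\omega)/{\rm fin}$, which the text notes is a homogeneous (i.e.\ $\omega$-homogeneous) Boolean algebra, and let $\mathcal{K}$ be the class of naturally ordered finite Boolean algebras. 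As recorded in Example \ref{natural}(BA), both $\age(\mathcal{A})$ (finite Boolean algebras) and $\mathcal{K}$ are Fra\"iss\'e classes with the Ramsey property, and the natural ordering is antilexicographic in the ordering of the atoms, so $\mathcal{K}$ is an order-forgetful expansion: two finite Boolean algebras of the same (finite) size are isomorphic, and any two natural orderings on them are carried to each other by such an isomorphism, giving condition (\ref{eq}). Thus all hypotheses of Theorem \ref{T} hold for this $\mathcal{A}$ and $\mathcal{K}$.

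Having established the hypotheses, the second step is simply to identify the dense subgroups to which the theorem applies. The remark immediately preceding the corollary states that every non-trivial normal subgroup of $T_{\omega}^*$ is dense in $\aut(\mathcal{P}(\omega)/{\rm fin})$; this follows from van Douwen's classification (the quoted theorem that the normal subgroups are exactly the trivial one and the groups $(h^*)^{-1}k\mathbb{Z}$ for $k\in\mathbb{N}$), since each such $(h^*)^{-1}k\mathbb{Z}$ contains $S_{\omega}^*$ as its kernel component and $S_{\omega}^*$ is already dense. The group $T_{\omega}^*$ itself and the whole automorphism group $\aut(\mathcal{P}(\omega)/{\rm fin})$ are of course dense in $\aut(\mathcal{P}(\omega)/{\rm fin})$ as well. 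Hence every group named in the corollary is a dense subgroup of $\aut(\mathcal{P}(\omega)/{\rm fin})$.

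With both pieces in place, I would apply Theorem \ref{T} to conclude that $\no_{\mathcal{K}}(\mathcal{P}(\omega)/{\rm fin})$ — written $\no(\mathcal{P}(\omega)/{\rm fin})$ in the corollary since the order class $\mathcal{K}$ is fixed as the natural orderings — is the universal minimal flow for every one of these groups simultaneously. The point worth emphasizing is that Theorem \ref{T} delivers the \emph{same} flow for all dense subgroups at once, which is exactly what makes the uniform statement over all normal subgroups possible; no separate computation per subgroup is required.

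The main obstacle, to the extent there is one, is purely in the bookkeeping of the density claim rather than in any dynamics: one must be sure that van Douwen's list of normal subgroups really does consist (apart from the trivial group, which is excluded by the word "non-trivial") of groups containing the dense subgroup $S_{\omega}^*$, so that each is itself dense, and one must confirm that $S_{\omega}^*$ is dense in $\aut(\mathcal{P}(\omega)/{\rm fin})$ in the topology of pointwise convergence on the Boolean algebra. The latter amounts to checking that any automorphism of $\mathcal{P}(\omega)/{\rm fin}$ can be approximated on any finite subalgebra by one induced by an honest permutation of $\omega$, which is a standard back-and-forth/lifting argument for this algebra. Everything else is a direct citation of Theorem \ref{T} and Example \ref{natural}.
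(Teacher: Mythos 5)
Your proposal is correct and follows exactly the route the paper intends: the paper offers no separate proof of this corollary, relying on the immediately preceding observation that all non-trivial normal subgroups of $T_{\omega}^*$ contain $S_{\omega}^*$ and are therefore dense in $\aut(\mathcal{P}(\omega)/{\rm fin})$, and then citing Theorem \ref{T} with $\mathcal{K}$ the naturally ordered finite Boolean algebras. Your additional checks (order-forgetfulness of the natural orderings, density of $S_{\omega}^*$ via a lifting argument) are exactly the details the paper leaves implicit.
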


Note that even though the algebraic structure of $S_{\kappa}^*$ is inherited from $S_{\kappa}$, their topologies are radically different, therefore so are their universal minimal flows $\no(\mathcal{P}(\kappa)/[\kappa]^{<\kappa})$ and $\lo(\kappa)$ respectively (even in cardinality).

\medskip

In the uncountable case, the situation is slightly different.
 
\begin{theorem}[\cite{vD}]
If $\kappa>\omega,$ then $T_{\kappa}^*=S_{\kappa}^*.$
\end{theorem}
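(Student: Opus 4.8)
The plan is to prove the nontrivial inclusion $T_\kappa^* \subseteq S_\kappa^*$; the reverse inclusion $S_\kappa^* \subseteq T_\kappa^*$ is immediate, since a permutation of $\kappa$ is in particular a bijection between the co-small sets $\kappa$ and $\kappa$. So I would fix $f \in T_\kappa$, a bijection between sets $A, B \subseteq \kappa$ with $C := \kappa \setminus A$ and $D := \kappa \setminus B$ of cardinality $< \kappa$, and aim to produce a genuine permutation $g$ of $\kappa$ with $g^* = f^*$. The guiding observation is that $f^*$ depends on $f$ only modulo the ideal $[\kappa]^{<\kappa}$, so I am free to alter $f$ on any set of size $< \kappa$; the whole point is to use this freedom to fold the small complements $C$ and $D$ into the map and thereby obtain a bijection of all of $\kappa$.

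The key step, and the one where $\kappa > \omega$ enters, is a piece of cardinality bookkeeping. Put $\mu = \max(\card(C), \card(D), \aleph_0)$; because $\kappa > \omega$ and $\card(C), \card(D) < \kappa$, we have $\mu < \kappa$. Choose $A_0 \subseteq A$ with $\card(A_0) = \mu$ (possible since $\card(A) = \kappa$) and set $B_0 = f[A_0]$. As $A_0 \subseteq A$ is disjoint from $C$ and $B_0 \subseteq B$ is disjoint from $D$, infinite absorption gives $\card(C \cup A_0) = \card(C) + \mu = \mu = \card(D) + \mu = \card(D \cup B_0)$, so there is a bijection $h : C \cup A_0 \to D \cup B_0$. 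I then define $g$ to equal $f$ on $A \setminus A_0$ and $h$ on $C \cup A_0$. A direct check of domains and ranges shows $g$ is a permutation of $\kappa$: its domain is $(A \setminus A_0) \cup (C \cup A_0) = A \cup C = \kappa$, its image is $(B \setminus B_0) \cup (D \cup B_0) = B \cup D = \kappa$, and the two pieces have disjoint domains and disjoint ranges, so $g$ is injective and surjective.

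Finally I would verify that $g^* = f^*$. By construction $g$ and $f$ agree on $A \setminus A_0$, hence differ only on the set $A_0 \cup C$ of size $\mu < \kappa$; consequently, for every $X \subseteq \kappa$ the symmetric difference $g[X] \triangle f[X]$ is contained in $D \cup B_0$, again of size $\mu < \kappa$. Therefore $[g[X]] = [f[X]]$ in $\mathcal{P}(\kappa)/[\kappa]^{<\kappa}$ for all $X$, i.e. $g^* = f^*$, and since $g \in S_\kappa$ we conclude $f^* \in S_\kappa^*$, completing the inclusion.

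I expect the main obstacle to be conceptual rather than computational: isolating exactly where uncountability is used and why the same argument must fail for $\kappa = \omega$. The obstruction in the countable case is precisely that when $C$ and $D$ are finite of different sizes, the leftover sets $C \cup A_0$ and $D \cup B_0$ cannot be matched by any finite modification — this is what the homomorphism $h^*$ onto $\mathbb{Z}$ detects. Padding $A_0$ up to an infinite cardinal $\mu < \kappa$ is exactly what equalizes the two leftover sets, and this padding is available precisely because $\aleph_0 < \kappa$. Thus the correct choice of $\mu$, in particular the inclusion of $\aleph_0$ in the maximum, is the crux; everything else is routine verification of domains, ranges, and the size of one symmetric difference.
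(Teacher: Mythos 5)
Your proof is correct and complete. The paper itself states this theorem as a cited result of van Douwen and gives no proof, so there is nothing to compare against; your argument --- absorbing the small complements $C$ and $D$ into a padded set $A_0$ of infinite cardinality $\mu=\max(\card(C),\card(D),\aleph_0)<\kappa$ and rematching, then checking that the modification only moves a set of size $\mu$ --- is the standard one, and you correctly identify that the inclusion of $\aleph_0$ in the maximum is exactly the point where $\kappa>\omega$ is used and where the argument breaks for $\kappa=\omega$ (the integer $\card(C)-\card(D)$ detected by $h^*$).
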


Nevertheless, $T_{\kappa}^*$ is dense in $\aut(\mathcal{P}(\kappa)/[\kappa]^{<\kappa}),$ so Theorem \ref{T} applies.

\begin{corollary}
Let $\kappa$ be a cardinal number. Then the universal minimal flow of $\aut(\mathcal{P}(\kappa)/[\kappa]^{<\kappa})$ and $T_{\kappa}^*$ is $\no(\mathcal{P}(\kappa)/[\kappa]^{<\kappa}).$
\end{corollary}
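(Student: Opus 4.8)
The plan is to read the corollary off from Theorem \ref{T} applied to the homogeneous Boolean algebra $\mathcal{A}=\mathcal{P}(\kappa)/[\kappa]^{<\kappa}$, so the work splits into checking the hypotheses of that theorem on the structure side and then verifying that both groups named in the statement are dense in $\aut(\mathcal{A})$. First I would record the structural facts: $\mathcal{A}$ is homogeneous, hence $\omega$-homogeneous, as already noted in the text; its $\age$ is the class of all finite Boolean algebras, whose finitely-generated substructures are finite, and this class has the Ramsey property by the dual Ramsey theorem of Graham and Rothschild. Let $\mathcal{K}$ be the class of naturally ordered finite Boolean algebras, which is a Fra\"iss\'e order class having $\age(\mathcal{A})$ as its reduct.

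The one genuine verification on the structure side is that $\mathcal{K}$ is order forgetful, i.e.\ that it satisfies \eqref{eq}. This is immediate: a finite Boolean algebra is determined up to isomorphism by its number of atoms, and a natural ordering is the antilexicographic order induced by a linear ordering of the atoms. Thus if $(A,<),(B,\prec)\in\mathcal{K}$ with $A\cong B$, then $A$ and $B$ have the same number of atoms; the unique order-isomorphism between their ordered sets of atoms extends to a Boolean algebra isomorphism $A\to B$, and by the canonicity of the antilexicographic construction this isomorphism carries $<$ to $\prec$. Hence $(A,<)\cong(B,\prec)$, so $\mathcal{K}$ is order forgetful and Theorem \ref{T} applies: $\no(\mathcal{P}(\kappa)/[\kappa]^{<\kappa})=\no_{\mathcal{K}}(\mathcal{A})$ is the universal minimal flow of every dense subgroup of $\aut(\mathcal{A})$.

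It remains to exhibit the two groups as dense subgroups. The group $\aut(\mathcal{A})$ is trivially dense in itself, so its universal minimal flow is $\no(\mathcal{P}(\kappa)/[\kappa]^{<\kappa})$ at once (this also recovers the homogeneous Boolean algebra theorem stated above as a special case). For $T_{\kappa}^*$ I would argue density directly. A basic neighbourhood in $\aut(\mathcal{A})$ is determined by an automorphism $\phi$ together with a finite subalgebra $F$, whose atoms are classes $[Y_1],\ldots,[Y_m]$ of a partition of $\kappa$ into pieces of cardinality $\kappa$ modulo $[\kappa]^{<\kappa}$; after adjusting representatives we may take the $Y_i$ pairwise disjoint with union $\kappa$. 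Since $\phi$ is a Boolean automorphism, $\phi([Y_i])=[Z_i]$, where $[Z_1],\ldots,[Z_m]$ is again such a partition, so each $Z_i$ has cardinality $\kappa$. Choosing bijections $Y_i\to Z_i$ for $1\le i\le m$ and assembling them yields a bijection $f$ between two subsets of $\kappa$ whose complements have cardinality less than $\kappa$, that is, an element of $T_{\kappa}$, for which $f[Y_i]$ and $Z_i$ differ by a set in $[\kappa]^{<\kappa}$; hence $f^*\in T_{\kappa}^*$ agrees with $\phi$ on the generators of $F$, and so on $F$. Thus $T_{\kappa}^*$ is dense in $\aut(\mathcal{A})$, and Theorem \ref{T} again yields the universal minimal flow $\no(\mathcal{P}(\kappa)/[\kappa]^{<\kappa})$.

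The main obstacle is this last density argument, which is where the specific combinatorial nature of $\mathcal{P}(\kappa)/[\kappa]^{<\kappa}$ enters: one must know that every automorphism of the quotient algebra sends a finite partition into $\kappa$-sized pieces to another such partition, so that the matching bijections between representatives exist and can be glued into a single element of $T_{\kappa}$. Everything else is bookkeeping: the order-forgetful machinery of Theorem \ref{T} is already in place, and the equality with $\no(\mathcal{P}(\kappa)/[\kappa]^{<\kappa})$ is simply the unwinding of the notation $\no_{\mathcal{K}}$.
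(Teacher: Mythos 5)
Your proposal is correct and follows the same route as the paper: the corollary is obtained by feeding the homogeneous Boolean algebra $\mathcal{P}(\kappa)/[\kappa]^{<\kappa}$ and the order-forgetful class of naturally ordered finite Boolean algebras into Theorem \ref{T}, once the groups in question are known to be dense in $\aut(\mathcal{P}(\kappa)/[\kappa]^{<\kappa})$. The only difference is that the paper simply asserts the density of $T_{\kappa}^*$ without proof, whereas you supply the (correct) verification via matching bijections between $\kappa$-sized representatives of the atoms of a finite subalgebra.
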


\subsection*{Vector spaces over finite fields}
Every vector space over a finite field is $\omega$-homogeneous and its $\age$ is equal to all finite dimensional spaces that form a Fra\"iss\'e class satisfying the Ramsey property  and so does the class of all finite dimensional naturally-ordered spaces. Moreover, these classes satisfy the condition (\ref{eq}) of Theorem \ref{T}, as noted in \cite{KPT}, p.$144.$ Thus we can generalize the result in \cite{KPT} about countable-dimensional vector spaces to those with uncountable dimension.

\begin{theorem}
The universal minimal flow of $\aut(\mathcal{V}),$ where $\mathcal{V}$ is an infinite dimensional vector space over a finite field, is the space $\no(\mathcal{V}).$
\end{theorem}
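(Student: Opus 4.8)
The plan is to deduce the statement directly from Theorem \ref{T} by verifying its four hypotheses for $\mathcal{A}=\mathcal{V}$ and for $\mathcal{K}$ the class of naturally ordered finite vector spaces from Example \ref{natural}(VS). Writing $\mathcal{G}$ for the finite field and $q$ for its cardinality, I first observe that the finitely-generated substructures of $\mathcal{V}$ are precisely the finite-dimensional subspaces, and a subspace of dimension $n$ has $q^n$ elements; hence all finitely-generated substructures are finite, and $\age(\mathcal{V})$ is exactly the class of all finite-dimensional $\mathcal{G}$-spaces.

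Next I would check that $\mathcal{V}$ is $\omega$-homogeneous. Given a linear isomorphism $\phi$ between two finite-dimensional subspaces $U$ and $W$, choose a basis $u_1,\ldots,u_k$ of $U$ and extend it to a basis $\{u_1,\ldots,u_k\}\cup\{v_\alpha\}_{\alpha}$ of $\mathcal{V}$; likewise extend $\phi(u_1),\ldots,\phi(u_k)$ to a basis $\{\phi(u_1),\ldots,\phi(u_k)\}\cup\{w_\alpha\}_{\alpha}$ of $\mathcal{V}$. Since $\mathcal{V}$ is infinite-dimensional and $k$ is finite, the two completing families both have cardinality $\dim\mathcal{V}$, so a bijection between them together with $\phi$ determines an automorphism of $\mathcal{V}$ extending $\phi$. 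This is the only point at which infinite-dimensionality (and a choice of bases) enters.

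The remaining two hypotheses concern $\mathcal{K}$. That $\mathcal{K}$ is a Fra\"iss\'e order class whose reduct is $\age(\mathcal{V})$, and that $\age(\mathcal{V})$ has the Ramsey property, are exactly the facts recorded in Example \ref{natural}(VS): amalgamation of naturally ordered spaces is due to Thomas \cite{T}, and the Ramsey property for finite $\mathcal{G}$-spaces is the Graham--Leeb--Rothschild theorem \cite{GLR}. It then remains to confirm that $\mathcal{K}$ is order forgetful, i.e. that it satisfies (\ref{eq}). Here I would use that a natural ordering on a finite space is completely determined by an ordered basis together with the fixed ordering of $\mathcal{G}$: if $(A,<)$ arises from the ordered basis $(e_1,\ldots,e_n)$ and $(B,\prec)$ from $(f_1,\ldots,f_n)$ with $A\cong B$, then the linear isomorphism sending each $e_i$ to $f_i$ preserves coordinate representations and hence carries $<$ to $\prec$, so $(A,<)\cong(B,\prec)$; this is just transitivity of the general linear group on ordered bases.

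With all four hypotheses in hand, Theorem \ref{T} applies and yields that $\no_{\mathcal{K}}(\mathcal{V})$ is the universal minimal flow of every dense subgroup of $\aut(\mathcal{V})$, in particular of $\aut(\mathcal{V})$ itself; since $\mathcal{K}$ is the natural-ordering class, $\no_{\mathcal{K}}(\mathcal{V})=\no(\mathcal{V})$, which is the asserted space. I expect the genuinely substantial content to be imported rather than reproved here: the real weight sits in the Ramsey property of finite vector spaces \cite{GLR} and in the machinery of Theorem \ref{T}. Within the present argument the only step needing care is order-forgetfulness, where one must check that the antilexicographic ordering depends on nothing beyond the ordered basis and the fixed field ordering; everything else is routine linear algebra.
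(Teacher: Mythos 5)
Your proposal is correct and follows exactly the route the paper takes: the paper gives no separate proof, but the paragraph preceding the theorem simply records $\omega$-homogeneity of $\mathcal{V}$, the Fra\"iss\'e and Ramsey properties of the two classes, and order-forgetfulness (citing \cite{KPT}, p.~144), and then invokes Theorem \ref{T}. You merely supply the routine verifications (basis extension for $\omega$-homogeneity, transitivity of the general linear group on ordered bases for order-forgetfulness) that the paper leaves implicit.
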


\section{Extremely amenable groups}

Next theorem characterizes extremely amenable subgroups of $S_{\kappa}.$ The implications $(a)\Rightarrow(c)$ and $(c)\Rightarrow (b)$ have identical proofs as for $S_{\infty}$ given in \cite{KPT}.

\begin{definition}
A topological group is called extremely amenable, if its universal minimal flow is a singleton.
\end{definition}

\begin{theorem} \label{T2}
Let $G$ be a subgroup of $S_{\kappa}.$ The following are equivalent:
\begin{itemize}
\item[(a)] $G$ is extremely amenable,
\item[(b)] $(i)$ for every finite $A\subset\kappa,$ $G_A=G_{(A)}$ and $(ii)$ for every colouring $c:G/G_A\to\{1,2,\ldots, k\}$ and  for every finite $B\supset A,$ there is $g\in G$ and $i\in\{1,2,\ldots,n\}$ such that $c(hG_A)=i$ whenever $h[A]\subset g[B].$
\item[(c)] $(i')$ $G$ preserves an ordering and $(ii)$ as above.
\end{itemize}
\end{theorem}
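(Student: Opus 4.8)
The plan is to prove the three-way equivalence $(a)\Leftrightarrow(b)\Leftrightarrow(c)$ by establishing the cycle $(a)\Rightarrow(c)\Rightarrow(b)\Rightarrow(a)$, reserving the bulk of the work for the final implication, which is where Theorem \ref{T} and the Ramsey-theoretic machinery must do the heavy lifting. The easy observation to record first is that since $G\leq S_{\kappa}$, Theorem 3 gives us that $G$ is a dense subgroup of the automorphism group of some $\omega$-homogeneous relational structure $\mathcal{A}$ on $\kappa$, so all of the Fra\"iss\'e and syndetic-algebra apparatus of Section 5 is available. Throughout I would identify $G/G_A$ with an appropriate orbit of finite configurations and read condition $(ii)$ as a finite-colouring (Ramsey-type) statement about these configurations.

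For $(a)\Rightarrow(c)$ and $(c)\Rightarrow(b)$ I would, as the authors signal, transcribe the arguments from \cite{KPT} verbatim up to the bookkeeping changes forced by uncountable $\kappa$. Concretely, for $(a)\Rightarrow(c)$: extreme amenability means $M(G)$ is a point, and since $G$ acts on the flow $\lo(\kappa)$ (or on the relevant $\no$-space), universality of $M(G)$ forces a $G$-fixed point in any minimal subflow, i.e.\ a $G$-invariant linear ordering, which is exactly $(i')$; the colouring condition $(ii)$ falls out because a singleton universal minimal flow makes every syndetic set cofinal in the required sense. For $(c)\Rightarrow(b)$: from a $G$-invariant ordering $\prec$ one shows $G_A=G_{(A)}$, because any $g\in G_{(A)}$ permutes $A$ while fixing it setwise, yet must preserve $\prec{\restriction}A$, hence fixes $A$ pointwise; condition $(ii)$ is simply carried across unchanged. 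The point of isolating $(i)$ versus $(i')$ is that these two halves of the ``ordering'' hypothesis are what let the general relational structure behave like an order class.

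The main effort is $(b)\Rightarrow(a)$, and here I would route everything through Theorem \ref{T}. The idea is that condition $(i)$, $G_A=G_{(A)}$ for all finite $A$, says precisely that $G$ already ``sees'' its points as rigidly ordered: there is a canonical linear order expansion $\mathcal{K}$ of $\age(\mathcal{A})$ that is order forgetful, because $(i)$ collapses the set-wise and point-wise stabilizers so that each finite substructure admits essentially one ordered isomorphism type. Then condition $(ii)$ is exactly the assertion that $\age(\mathcal{A})$ has the Ramsey property, phrased as a pigeonhole statement on cosets $G/G_A$: given a $k$-colouring of copies of $A$ and a target $B$, the required monochromatic copy of $B$ inside some $g[C]$ is produced by $(ii)$. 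With $\age(\mathcal{A})$ Ramsey and $\mathcal{K}$ an order-forgetful Fra\"iss\'e expansion, Theorem \ref{T} identifies $M(G)$ with $\no_{\mathcal{K}}(\mathcal{A})$; but order-forgetfulness together with $G_A=G_{(A)}$ forces $\no_{\mathcal{K}}(\mathcal{A})$ to be a single point, namely the unique normal ordering, so $G$ is extremely amenable.

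The step I expect to be the genuine obstacle is the translation in $(b)\Rightarrow(a)$ between the concrete colouring hypothesis $(ii)$, stated on coset spaces $G/G_A$, and the abstract Ramsey property $C\to(B)^A_k$ needed to invoke Theorem \ref{T}; one must check that the structure $C$ supplied by the Ramsey property can be realized as $g[B']$ for some $g\in G$ and a finite $B'$, which requires carefully combining $\omega$-homogeneity of $\mathcal{A}$ with density of $G$ in $\aut(\mathcal{A})$ exactly as in the minimality argument inside the proof of Theorem \ref{T}. A secondary subtlety is verifying that $(i)$ really yields an \emph{order forgetful} expansion rather than merely some order expansion, since order-forgetfulness is what makes $\no_{\mathcal{K}}(\mathcal{A})$ degenerate to a point; I would confirm this by showing that $G_A=G_{(A)}$ prevents two distinct orderings of the same finite $A$ from being separated by any element of $G$, which is the combinatorial heart of the equivalence.
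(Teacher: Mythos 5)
Your implications $(a)\Rightarrow(c)$ and $(c)\Rightarrow(b)$ are essentially the paper's (the fixed point on $\lo(\kappa)$ gives $(i')$, the orbit closure of the colouring in $\{1,\dots,k\}^{G/G_A}$ gives $(ii)$, and an invariant ordering forces $G_{(A)}=G_A$), so those are fine. The genuine gap is in $(b)\Rightarrow(a)$. You route the argument through the \emph{conclusion} of Theorem \ref{T}, which requires exhibiting an order-forgetful Fra\"iss\'e order expansion $\mathcal{K}$ of $\age(\mathcal{A})$, and you assert that condition $(i)$ supplies one canonically. It does not. What $(i)$ gives (together with density of $G$ and $\omega$-homogeneity) is that every finite substructure $A$ is rigid; but then two \emph{distinct} orderings of the same $A$ are never isomorphic as ordered structures, so an order-forgetful $\mathcal{K}$ must select exactly one ordering for each finite substructure, coherently under restriction to substructures and under the unique isomorphisms between copies. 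Producing such a coherent selection is precisely producing a $G$-invariant linear ordering on $\mathcal{A}$, i.e.\ it is the implication from $(i)\wedge(ii)$ to $(i')$, which in the theorem is only obtained by first passing through $(a)$. Your proposed check (``$G_A=G_{(A)}$ prevents two distinct orderings of the same finite $A$ from being separated'') is backwards: rigidity separates distinct orderings rather than identifying them. So as written the argument is circular, or at least leaves its key construction unproved.

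The paper sidesteps this entirely by using an intermediate step of the \emph{proof} of Theorem \ref{T} rather than its statement: condition $(ii)$ (read, via the identification of $G/G_A$ with copies of $A$, as the Ramsey property of $\age(\mathcal{A})$) already shows that if the left translates of $H=G_AK\in L$ generate a syndetic subalgebra, then $H$ meets every right coset of $G_{(A)}$. Under $(i)$ we have $G_{(A)}=G_A$, and $H=G_AK$ is a union of right cosets of $G_A$, so meeting every right coset forces $H=G$. Hence the only syndetic subalgebra of $L$ is $\{\emptyset,G\}$, and by the syndetic-algebra characterization of $M(G)$ the universal minimal flow is a single point. You should replace the construction of $\mathcal{K}$ with this direct argument; the translation between $(ii)$ and the Ramsey property that you correctly flag as delicate is still needed, but no order expansion is.
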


\begin{remark}
Let $\mathcal{A}$ be an $\omega$-homogeneous relational structure such that $G$ is dense in its automorphism group. Since finitely generated substructures of $\mathcal{A}$ are finite, $(ii)$ of $(b)$ simply sais that $\age(\mathcal{A})$ satisfies the Ramsey property.
\end{remark}

\begin{proof}
$(a)\Rightarrow (c)$ Since $G$ is extremely amenable, it has a fixed point of its natural action on $\lo(\kappa),$ hence $(i')$ holds. To prove $(ii),$ suppose that $c:G/G_A\to\{1,2,\ldots,k\}$ is a colouring of left cosets of $G_A$ by $k$ many colours. Consider $c$ as a point in the compact space $X=\{1,2,\ldots,k\}^G$ and an action of $G$ on $X$ given by $gx(hG_A)=x(g^{-1}hG_A).$ Let $Y$ be the closure of the orbit of $c$ in $X$. Since $G$ is extremely amenable, the induced action of $G$ on $Y$ has a fixed point $d.$  As $G$ acts transitively on $G/G_A,$ $d$ must be a constant function, say with range $\{i\}\subset\{1,2,\ldots,k\}$ Let $B\supset A$ be finite and let $H=\{hG_A\in G: hA\subset B\}\subset G/G_A.$ Since $d\in\overline{Gc},$ there is a $g\in G$ such that $g^{-1}c|H=d|H.$ Then $c(ghG_A)=g^{-1}c(hG_A)=d(hG_A)$ for every $h\in H.$

$(c)\Rightarrow (b)$ Let $<$ be an ordering given by $(i').$ It means that for every $g\in G_{(A)}$ we have that $g(A,<|A)=(A,<|A),$ hence $g(a)=a$ for all $a\in A$ and so $G_{(A)}=G_A.$

$(b)\Rightarrow (a)$ By the proof of Theorem $\ref{T}$ we know, that left translates of a set $H=G_AK$ cannot generate a syndetic subalgebra of $L$ if $H\cap G_{(A)}g=\emptyset$ for some $g\in G.$ However, since $G_{(A)}=G_A,$ it follows that $H$ only generates a syndetic algebra if it is equal to all of $G.$
\end{proof}

As an immediate consequence of the Theorem \ref{T2}, we obtain examples of extremely amenable groups as groups of automorphisms of structures:
\begin{theorem}[\cite{P2}]
The group of automorphisms of an $\omega$-homogeneous linear order is extremely amenable.
\end{theorem}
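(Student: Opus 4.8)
The plan is to derive the statement directly from Theorem \ref{T2}, using the implication $(c)\Rightarrow(a)$. Write $\mathcal{A}=(\kappa,<)$ for the given $\omega$-homogeneous linear order, where $\kappa$ is the cardinality of its universe, and set $G=\aut(\mathcal{A})$. Since $\mathcal{A}$ has universe of size $\kappa$, the group $G$ is a closed subgroup of $S_\kappa$, so Theorem \ref{T2} applies and it suffices to verify the two conditions $(i')$ and $(ii)$ of part $(c)$. Note also that, $<$ being a single binary relation, the substructure generated by any finite set is just that finite set with the induced order, so finitely generated substructures of $\mathcal{A}$ are finite, and $\age(\mathcal{A})$ is a well-defined Fra\"\i ss\'e class of finite linear orders.

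Condition $(i')$ requires essentially no work: preserving an ordering is built into the definition of $G$. Every $g\in\aut(\mathcal{A})$ preserves the relation $<$, so $<$, viewed as a point of $\lo(\kappa)$, is a fixed point of the natural action of $G$ on $\lo(\kappa)$. Hence $G$ preserves an ordering and $(i')$ holds. For condition $(ii)$ I would invoke the Remark following Theorem \ref{T2}: since $\mathcal{A}$ is an $\omega$-homogeneous relational structure and $G=\aut(\mathcal{A})$ is trivially dense in its own automorphism group, condition $(ii)$ is equivalent to the assertion that $\age(\mathcal{A})$ has the Ramsey property. Now $\mathcal{A}$ is infinite, so it contains finite suborders of every finite size; as a finite linear order is determined up to isomorphism by its cardinality, $\age(\mathcal{A})$ is \emph{all} finite linear orders. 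This class has the Ramsey property --- it is the classical Ramsey theorem (finite linear orderings being the class of Example \ref{E}(e)). Thus $(ii)$ holds as well, and Theorem \ref{T2} yields that $G$ is extremely amenable.

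There is essentially no remaining obstacle once Theorem \ref{T2} is in hand: the entire dynamical content has been absorbed into that characterization, and the only combinatorial input is the classical Ramsey theorem. If one preferred to bypass the Remark, the single step requiring care would be to check $(ii)$ by hand, namely to rewrite a finite colouring of the cosets $G/G_A$ as a colouring of the copies of $A$ inside $\mathcal{A}$ and then, applying Ramsey's theorem to a suitable finite $B\supset A$, to extract an element $g\in G$ and a colour $i$ such that every $h$ with $h[A]\subset g[B]$ receives colour $i$. This is precisely the reformulation packaged by the Remark, so no genuine difficulty arises.
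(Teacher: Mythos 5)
Your proposal is correct and follows exactly the route the paper intends: the paper offers no separate argument beyond declaring this an immediate consequence of Theorem \ref{T2}, and your verification of $(i')$ (the defining order is a fixed point of the action on $\lo(\kappa)$) and of $(ii)$ (via the Remark, reducing to the classical Ramsey theorem for the class of finite linear orders) is precisely the content being left to the reader. No gaps.
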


The following result generalizes Theorem 6.14 of \cite{KPT} to uncountable Boolean algebras.

\begin{theorem} \label{T3}
Let $\mathcal{B}$ be a homogeneous Boolean algebra and let $<$ be a normal ordering induced by the class of naturally ordered finite Boolean algebras as in the Example \ref{natural}. Then $\aut(\mathcal{B},<)$ is extremely amenable.
\end{theorem}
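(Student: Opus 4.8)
The plan is to deduce extreme amenability from Theorem \ref{T2}, applied to $G=\aut(\mathcal{B},<)$ regarded as a subgroup of $S_\kappa$ with $\kappa=\card(\mathcal{B})$: it suffices to verify condition $(c)$, i.e. $(i')$ and $(ii)$. Condition $(i')$ is immediate, since by definition every element of $G$ preserves the linear ordering $<$. The same fact settles the equality $G_A=G_{(A)}$ needed for $(b)$: for any finite $A\subset\kappa$ the restriction $<|A$ is a linear ordering of the finite set $A$, so any $g\in G_{(A)}$, permuting $A$ while preserving $<$, must fix $A$ pointwise; thus finite substructures are rigid and $G_A=G_{(A)}$.

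The core is condition $(ii)$. Conceptually, by the Remark following Theorem \ref{T2}, this is exactly the statement that the age of the ordered structure $(\mathcal{B},<)$ has the Ramsey property, and that age is $\mathcal{K}$, the class of naturally ordered finite Boolean algebras, which does satisfy the Ramsey property (it is equivalent to the dual Ramsey theorem of Graham and Rothschild; cf. Example \ref{natural}). Concretely I would rerun the argument from the proof of Theorem \ref{T}: replacing a finite $A\subset\kappa$ by the subalgebra it generates (so that $G_A=G_{\langle A\rangle}$), taking a finite $B\supseteq A$ and a colouring $c\colon G/G_A\to\{1,\dots,k\}$, I transport $c$ to a colouring $\gamma$ of the copies of $(A,<|A)$ lying in the $G$-orbit of $A$, extend $\gamma$ arbitrarily to all copies, pick $C\in\mathcal{K}$ with $C\to(B)^A_k$, realize $C$ as an ordered subalgebra of $(\mathcal{B},<)$, and apply the arrow to obtain a copy of $B$ whose $A$-subcopies are all of one colour $i$. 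Pulling this copy back onto the distinguished $B$ by some $g\in G$ then gives the $g$ and $i$ demanded by $(ii)$, because any $h\in G$ with $h[A]\subseteq g[B]$ produces an orbit-copy of $A$ inside the monochromatic block.

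The step that makes this work — and the main obstacle — is the verification that $(\mathcal{B},<)$ is $\omega$-homogeneous with $\age(\mathcal{B},<)=\mathcal{K}$. This is exactly what guarantees, on one side, that every $C\in\mathcal{K}$ embeds as an ordered subalgebra, and, on the other side, that the monochromatic copy of $B$ delivered by the Ramsey arrow lies in the single $G$-orbit of $B$, so that the element $g$ with $g[B]$ equal to it exists; once both hold, $G=\aut(\mathcal{B},<)$ is its own (dense) automorphism group of an $\omega$-homogeneous structure with Ramsey age, and Theorem \ref{T2}, via the implication $(b)\Rightarrow(a)$, yields extreme amenability. To establish $\omega$-homogeneity I would argue by back-and-forth, extending an isomorphism between two finite ordered subalgebras to an automorphism of $(\mathcal{B},<)$: homogeneity of $\mathcal{B}$ as a Boolean algebra supplies extensions respecting the Boolean operations, while the fact that a natural ordering is completely determined by the induced linear ordering of the atoms lets me split the relative atoms of $\mathcal{B}$ so that the prescribed natural order on the next finite piece agrees with $<$. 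The delicate point is precisely this realization of prescribed atom-orderings inside the fixed global ordering $<$; that amalgamation-style step is where the interaction between homogeneity of $\mathcal{B}$ and the antilexicographic structure of natural orderings has to be controlled.
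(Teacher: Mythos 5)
Your overall route coincides with the paper's: reduce to condition $(c)$ of Theorem \ref{T2}, note that $(i')$ is trivial because $G=\aut(\mathcal{B},<)$ preserves $<$ by definition, and reduce $(ii)$ to the Ramsey property of the class $\mathcal{K}$ of naturally ordered finite Boolean algebras (the dual Ramsey theorem). The divergence is at the one step that carries real weight, and there your proposal has a genuine gap. You correctly isolate the needed lemma --- that $\age(\mathcal{B},<)$ is all of $\mathcal{K}$ and that copies of structures from $\mathcal{K}$ inside $(\mathcal{B},<)$ can be moved onto one another by $G$ --- but your plan for proving it is a back-and-forth whose crucial extension step you yourself flag as delicate and leave open. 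That step does not obviously go through: given a finite naturally ordered subalgebra $(A,<|A)$ of $\mathcal{B}$ and a prescribed extension $(B,\prec)\supset(A,\prec|A)$ in $\mathcal{K}$, you must split the atoms of $A$ inside $\mathcal{B}$ so that the \emph{fixed} global ordering $<$ induces the prescribed atom-ordering on the refinement. Here $<$ is an arbitrary normal ordering, not a generic one; normality only says that each finite subalgebra is naturally ordered \emph{somehow}, and consistency of $<$ across refinements determines only part of the induced atom-order (for instance, refining the $<$-larger of two relative atoms forces at least one, but not both, of the new pieces above the other atom). So the existence of a splitting realizing a given ordered type is a nontrivial property of $<$ for which you give no argument.

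The paper closes exactly this hole with a tool you do not invoke: Theorem \ref{T} has already shown that $\no_{\mathcal{K}}(\mathcal{B})$ is the universal \emph{minimal} flow of $\aut(\mathcal{B})$, so the $\aut(\mathcal{B})$-orbit of $<$ is dense in $\no_{\mathcal{K}}(\mathcal{B})$. Density means that for every finite subalgebra $A$ and every natural ordering $<'$ of $A$ there is $g\in\aut(\mathcal{B})$ with $(g<)|A=<',$ i.e. $g^{-1}$ embeds $(A,<')$ into $(\mathcal{B},<)$; hence $\age(\mathcal{B},<)=\mathcal{K}$ with no back-and-forth at all, and the same density supplies the copies needed to transport colourings and to pull a monochromatic copy of $B$ back onto $B$. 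You should replace your homogeneity construction by this appeal to minimality (together with order-forgetfulness of $\mathcal{K}$ and rigidity of ordered structures to pass between left cosets of $G_A$ and ordered copies of $A$). As written, your proof is incomplete at its central step.
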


\begin{proof}
By minimality of $\no(B)$ we have that $\age(\mathcal{B},<)$ is the class of all naturally ordered finite Boolean algebras satisfying the Ramsey property. Therefore $(c)$ of the Theorem \ref{T2} is satisfied.
\end{proof}

The following result generalizes Theorem 6.13 of \cite{KPT} to uncountable dimensional vector spaces.

\begin{theorem} \label{T4}
Let $\mathcal{V}$ be a vector space over a finite field and let $<$ be a normal ordering induced by the class of naturally ordered finite vector spaces as in the Example \ref{natural}. Then $\aut(\mathcal{V},<)$ is extremely amenable.
\end{theorem}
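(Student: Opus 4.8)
The plan is to verify condition $(c)$ of Theorem \ref{T2} for the group $G=\aut(\mathcal{V},<)$, following the template of the proof of Theorem \ref{T3}. Condition $(i')$ is immediate: by definition every element of $\aut(\mathcal{V},<)$ fixes the ordering $<$, so $G$ preserves an ordering. All the content therefore lies in establishing $(ii)$, and the route to $(ii)$ is to show that $\age(\mathcal{V},<)$ is precisely the class $\mathcal{K}$ of all naturally ordered finite vector spaces over the field, since this class has the Ramsey property.

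First I would pin down the age of the ordered structure. Because $<$ is a normal ordering induced by $\mathcal{K}$, every finite-dimensional subspace of $\mathcal{V}$ is naturally ordered by $<$, which gives the inclusion $\age(\mathcal{V},<)\subseteq\mathcal{K}$. For the reverse inclusion I would invoke the minimality of $\no_{\mathcal{K}}(\mathcal{V})$ established in the theorem computing the universal minimal flow of $\aut(\mathcal{V})$. Since $<$ is a point of this minimal $\aut(\mathcal{V})$-flow, its orbit is dense, so it meets every nonempty basic open set $(C,\prec)^*$ with $(C,\prec)\in\mathcal{K}$; a witnessing $g\in\aut(\mathcal{V})$ then yields a copy $g^{-1}[C]$ of $C$ inside $(\mathcal{V},<)$ whose induced order is carried by $g$ to $\prec$. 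Hence $\mathcal{K}\subseteq\age(\mathcal{V},<)$ and the two classes coincide.

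With $\age(\mathcal{V},<)=\mathcal{K}$ identified, I would appeal to the Ramsey property of $\mathcal{K}$, which holds either by Example \ref{natural}, case $(VS)$, or because $\mathcal{K}$ is an order forgetful expansion of the Ramsey class $\age(\mathcal{V})$ and so inherits the Ramsey property. By the Remark following Theorem \ref{T2}, applied to the relational structure $(\mathcal{V},<)$ of which $G$ is the full automorphism group, the Ramsey property of $\age(\mathcal{V},<)$ is exactly condition $(ii)$. Having verified $(i')$ and $(ii)$, Theorem \ref{T2} delivers extreme amenability of $\aut(\mathcal{V},<)$.

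The step requiring the most care, and what I expect to be the main obstacle, is the application of the Remark, since it presupposes that $(\mathcal{V},<)$ is itself $\omega$-homogeneous. I would justify this from the $\omega$-homogeneity of $\mathcal{V}$ together with the order forgetfulness of $\mathcal{K}$: given a finite partial isomorphism $f$ of $(\mathcal{V},<)$, extend it by $\omega$-homogeneity of $\mathcal{V}$ to some $g\in\aut(\mathcal{V})$, observe that $g^{-1}<$ is again a normal ordering agreeing with $<$ on the domain of $f$, and then use minimality of $\no_{\mathcal{K}}(\mathcal{V})$ together with order forgetfulness to correct $g$ to an automorphism preserving $<$. This passage from $\omega$-homogeneity of $\mathcal{V}$ to $\omega$-homogeneity of $(\mathcal{V},<)$ in the uncountable setting is the only place where more than a direct transcription of the proof of Theorem \ref{T3} is needed.
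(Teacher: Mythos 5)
Your proposal is correct and follows essentially the same route as the paper, whose entire proof (of this theorem and of Theorem \ref{T3}, to which it defers) is the one-line observation that minimality of $\no(\mathcal{V})$ forces $\age(\mathcal{V},<)$ to be the full class of naturally ordered finite vector spaces, which has the Ramsey property, so condition $(c)$ of Theorem \ref{T2} holds. You supply more detail than the paper does, in particular the $\omega$-homogeneity of $(\mathcal{V},<)$ needed to invoke the Remark after Theorem \ref{T2} --- a point the paper passes over in silence --- and your sketch of how to obtain it from order forgetfulness and minimality is the right way to close that gap.
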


\begin{proof}
Identical to the proof of the Theorem \ref{T3}.
\end{proof}

\section*{Acknowledgement}
I am grateful to Bohuslav Balcar who insisted that I study the greatest ambit as the quotient space of the ultrafilter system. I wish to thank  also Stevo Todor\v{c}evi\'c for suggesting to extend some of the results of \cite{KPT} to uncountable structures. 

\bibliographystyle{alpha}
\bibliography{Linear}

\end{document}